\documentclass[final,3p,times]{elsarticle}

\usepackage{amssymb}
\usepackage{amsmath}
\usepackage{amsthm}
\usepackage{hhline}
\usepackage{multirow}
\usepackage{xcolor}

\journal{}

\def\la {\langle}
\def\ra {\rangle}
\def\ui {{\bf u}^{n, i}}
\def\bu {{\bf u}}
\def\eiu {{\bf e}_{\bf u}^{i}}
\def\fui {{\bf e}_{\bf u}^{i-1}}

\def\epi {e_p^{i}}
\def\eip1 {{e_p^{i-1}}}
\def\bv {{\bf v }}

\begin{document}

\begin{frontmatter}



\title{A robust iterative scheme for the slightly compressible Darcy-Forchheimer equations}


\author[1]{Laura Portero}\ead{laura.portero@unavarra.es}
\author[1]{Andr\'es Arrar\'as}\ead{andres.arraras@unavarra.es}
\author[2]{Francisco J. Gaspar}\ead{fjgaspar@unizar.es}
\author[3]{Florin A. Radu}\ead{florin.radu@uib.no}

\affiliation[1]{organization={Institute for Advanced Materials and Mathematics (INAMAT2), Department of Statistics, Computer Science and Mathematics, Public University of Navarre},
	addressline={Campus Arrosad\'ia},
	city={Pamplona},	postcode={31006},
	country={Spain}
}

\affiliation[2]{organization={University Institute of Mathematics and Applications (IUMA), Department of Applied Mathematics, University of Zaragoza},
	addressline={Pedro Cerbuna 12},
	city={Zaragoza},
	postcode={50009},
		country={Spain}
}
	
\affiliation[3]{organization={Center for Modeling of Coupled Subsurface Dynamics, Department of Mathematics, University of Bergen},
	addressline={Allegaten 41},
	city={Bergen},
	postcode={5007},
	country={Norway}
}

\begin{abstract}
We study the slightly compressible Darcy-Forchheimer equations modeling gas flow in porous media, particularly in applications related to combustion processes. The equations are discretized in time using the backward Euler method and in space via a mixed finite element scheme. As a result, a nonlinear algebraic system is obtained at each time step.

We propose and analyze a general iterative linearization scheme for the efficient solution of such systems and study its convergence properties at the discrete level. The performance and robustness of the scheme are assessed through a series of numerical experiments. The method is compared with standard iterative solvers, and further tested on problems with discontinuous permeability fields. The results demonstrate its reliability and competitiveness in regimes characterized by strong nonlinear effects.

\end{abstract}


\begin{keyword} 
Convergence analysis \sep Darcy-Forchheimer equations \sep Linearization schemes \sep Porous media problems \sep Slightly compressible flow 



\end{keyword}

\end{frontmatter}

\newtheorem{theorem}{Theorem}
\newtheorem{lemma}{Lemma}
\newtheorem{remark}{Remark}



\section{Introduction}\label{sec:intro}

Darcy's law is the empirical linear model that describes how fluid flows through porous media when the flow is laminar and the inertial effects are negligible. In regimes characterized by higher velocities, for instance near wells and fractures or when considering highly permeable media, inertial effects are no longer negligible and Darcy's law may fail. To account for nonlinear inertial effects, Forchheimer extended Darcy's law by adding a quadratic nonlinear correction term. 

The complexity of the Darcy-Forchheimer model varies significantly depending on the fluid properties. In the incompressible stationary case, the fluid density is assumed to be constant and the variables do not depend on time, leading to a system of nonlinear elliptic equations \cite{gir:whe:2008,lop:mol:sal:2009}. Conversely, the slightly compressible evolutionary model is more relevant for gas flow modeling, where the density depends on the pressure via a state equation \cite{aul:blo:hoa:ibra:2009,kie:2015,rui:pan:2017}. There are many important applications behind, especially in connection with combustion in porous media: gas turbines, fuel cells, industrial drying, radial burners for heating, or waste
management, to mention a few (see also the review paper \cite{CombustionReview}). The mathematical well-posedness of these models has been studied during the last decades. Existence, uniqueness, and regularity results for the Darcy-Forchheimer equations were first derived in \cite{Fabrie_1989,Amirat_1991,kna:sum:2016}.

Due to the coupling and nonlinearities, the Darcy-Forchheimer model is very challenging to be solved numerically. In fact, there are numerous papers in the literature dealing with its numerical solution. 

The mixed finite element (MFE) method is a very popular technique for the discretization of such equations, since it is locally mass conservative and provides an accurate approximation of the flux variable. We mention \cite{kna:sum:2016,douglas1993,kim1999,ewing1999,summ_thesis,park2005} for MFE discretizations of the Darcy-Forchheimer model. Remarkably, the scheme proposed in \cite{gir:whe:2008} uses discontinuous constant elements for the velocity and discontinuous $\mathbb{P}_1$ Crouzeix--Raviart elements for the pressure. In addition, the nonlinear system stemming from the discretization is solved by means of an alternating-direction iterative algorithm inspired by the Peaceman--Rachford splitting scheme \cite{peaceman1955}. Extensions of this work to different MFE spaces are presented in \cite{lop:mol:sal:2009}. Another weak formulation, related to Raviart--Thomas and Brezi--Douglas--Marini elements, is proposed in \cite{pan:rui:2012}, where existence, uniqueness and regularity are proven for both the weak formulation and the corresponding discrete schemes. In \cite{kie:2015}, an expanded MFE discretization is proposed and analyzed.

On the other hand, block-centered finite difference schemes have also been applied for solving the Darcy-Forchheimer equations \cite{rui:pan:2012}. Such schemes can be obtained from the lowest order Raviart--Thomas MFE spaces with a suitable quadrature formulation, and extend the original method introduced for linear elliptic problems with diagonal and full tensor diffusion coefficients (see \cite{wei:whe:1988} and \cite{arb:whe:yot:1997}, respectively) to the nonlinear case. Subsequent generalizations to slightly compressible Darcy-Forchheimer flow are proposed in \cite{rui:pan:2017}.

Multipoint flux approximation methods represent an alternative choice for the Darcy-Forchheimer model. In \cite{xu:lia:rui:2017}, a multipoint flux MFE method is proposed for the  compressible model, considering the lowest order Brezzi--Douglas--Marini elements in combination with a suitable quadrature rule that allows for local velocity elimination.

In the context of finite volume discretizations, \cite{both2020DF} presents an efficient technique for Darcy-Forchheimer flow in deformable porous media (i.e., coupled with the quasi-static Biot equations). The generalized gradient flow structure of the model (see \cite{bothGradientFlow2019}) is used to analyze the scheme.

Discrete fracture-matrix models which couple Darcy flow in the porous matrix with Forchheimer flow within the fractures have also been proposed in the literature (cf. \cite{kna:rob:2014}). In \cite{Frih_2006,Frih_2008}, the lowest order Raviart--Thomas elements are combined with a domain decomposition technique in order to obtain numerical approximations. The nonlinear system resulting from the Forchheimer equation is solved by means of fixed-point iteration and quasi-Newton methods. Other efficient solvers based on geometric multigrid techniques have been proposed in \cite{arr:gas:por:rod:2019} for such models.

All of the aforementioned models and discretizations share a common feature: solving the nonlinear algebraic systems arising from the spatial (and sometimes also temporal) discretization remains a significant challenge. Newton method is frequently used due to its quadratic convergence rate, although its performance can be slowed down by sensitivity to the initial guess and the need to recompute the Jacobian at each step. Picard iterations offer a simpler alternative, but often at the cost of a slower convergence. To improve efficiency, specialized iterative solvers have been developed, such as the Peaceman--Rachford method, analyzed in \cite{gir:whe:2008} to solve the resulting discretized nonlinear system. Essentially, it is based on calculating an intermediate solution for a decoupled nonlinear problem, and then solving a linear saddle point system. In addition, this iterative method is used in \cite{huang:chen:rui:2018} as a smoother for a full approximation scheme (FAS), which is the nonlinear version of a multigrid method. Another interesting approach, based on a two-grid method, is proposed in \cite{rui:liu:2015}. Such a method combines the solution of a nonlinear problem on a coarse grid and a linear problem on a target fine grid.

In this paper, we focus on the solution of the nonlinear systems stemming, at each time step, from the space and time discretizations of a slightly compressible Darcy-Forchheimer model. We propose a general iterative linearization scheme based on the $L$-scheme (a stabilized Picard linearization method \cite{listradu2016, pop2004}), which can be applied, in principle, to any spatial discretization. The equations are stabilized by using a free parameter to be chosen, as it is often done when the $L$-scheme is considered. The convergence of the scheme is rigorously shown, under realistic assumptions and provided that the stabilization parameter is large enough. Further, it is studied numerically, first on a series of academic examples and then on more realistic scenarios. A comparison with existing linearization schemes, such as Picard, relaxed Picard and Newton methods, is presented as well. In the Appendix, we also propose and analyze a similar iterative scheme for a simplified version of the Darcy-Forchheimer model, for which the density is assumed to be constant. Note that this variant was previously studied in \cite{al2024iterative,sayah2021} using a slightly different approach.

In summary, the main contributions of this paper are
\begin{itemize}
\item[(a)]{a general iterative linearization scheme is proposed for the slightly compressible Darcy-Forchheimer flow model in porous media;}
\item[(b)]{the theoretical proof for the convergence of such a scheme is provided;}
\item[(c)]{a study of the performance of different linearization schemes (Newton, Picard, relaxed Picard and $L$-scheme) for the proposed model is conducted.}
\end{itemize}

The paper is structured as follows. In Section \ref{sec:mathmodel}, we introduce the mathematical model along with the main notations used throughout the work. The iterative scheme for the slightly compressible Darcy-Forchheimer model is presented in Section \ref{sec:linschemes}, where its convergence is also proven. Section \ref{sec:numerics:1} presents some illustrative numerical experiments, including standard academic benchmarks and more demanding Darcy–Forchheimer flow configurations with discontinuous permeabilities. Finally, Section \ref{sec:conclusion} establishes some concluding remarks. The paper is completed with an Appendix that contains the convergence analysis of a linearization scheme for a simplified version of the Darcy-Forchheimer model.

\section{Mathematical model and discretization} \label{sec:mathmodel}

In this paper, we use standard notations from functional analysis. Let $\Omega \subset \mathbb{R}^d$ be an open, bounded domain with a Lipschitz continuous boundary. Here, $d$ stands for the spatial dimension. Let $T>0$ denote the final time. Given $N \in \mathbb{N}$ with $N\ge1$, $\tau := T/N$ is the time step size and $t_n := n\tau$, for $n = 1, 2, \ldots, N$, denotes the time nodes. For $p \ge 1 $, we denote by $L^p(\Omega)$ the usual Lebesgue space with the standard norm
$$
\| f \|_p := \left(\int_\Omega |f(x)|^p\,dx\right)^{1/p},
$$
for $p < \infty$ and $\| f \|_\infty := ess\,sup_{x \in \Omega} |f(x)| $.

Next, let us describe the slightly compressible flow model considered in this work. The general equation for mass conservation can be written as
\begin{equation}\label{mass:conserv:eq}\partial_t(\phi\rho)+\nabla\cdot(\rho\mathbf{u})=0,\end{equation}
where $\phi$ is the porosity of the medium, and $\mathbf{u}$ and $\rho$ are the velocity and the density of the fluid, respectively. Under isothermal conditions, the state equation relates the density $\rho$ with the pressure $p$, as follows
\begin{equation}\label{state:eq}\frac{1}{\rho}\,\frac{d \rho}{d p}=c_f,\end{equation}
where $c_f$ is the compressibility coefficient. From this expression, it is straightforward to deduce that the fluid density depends on the pressure in the following way 
\[\rho(p)=\rho_{\rm{ref}}\,e^{c_f(p-p_{\rm{ref}})},\]
where $\rho_{\rm{ref}}$ is the reference density at the reference pressure $p_{\rm{ref}}$. For slightly compressible fluids, $c_f$ varies between $10^{-5}$ and $10^{-6}$. Thus, carrying out differentiation in (\ref{mass:conserv:eq}), dividing by $\rho$ and using the relation (\ref{state:eq}), we obtain
\[\phi\,c_f\partial_t p+c_f\nabla p\cdot\mathbf{u}+\nabla\cdot\mathbf{u}=0.\]
For slightly compressible fluids, the second term in the equation above is small in almost all of the domain and can be neglected (cf. \cite{dou:rob:1983,aul:blo:hoa:ibra:2009,rui:pan:2017} and references therein). Finally, considering a rescaling in the time variable and a possible source/sink term $f$, we can express the mass conservation equation as 
\[\partial_t\,p+\nabla\cdot\mathbf{u}=f.\]
On the other hand, the Darcy-Forchheimer equation, which accounts for inertial effects that arise at relatively high flow velocities, is given by
\[\mu K^{-1}\mathbf{u}+\beta\rho\,|\mathbf{u}|\,\mathbf{u}+\nabla p=\rho g\nabla Z,\]
where $\mu$ denotes the fluid viscosity, $K$ is the permeability tensor, $\beta\geq 0$ represents the Forchheimer coefficient, $g$ is the gravitational constant, and $Z$ denotes the depth of the domain. A theoretical derivation of Forchheimer’s law can be found in \cite{rut:ma:1992}. In particular, when $\beta=0$, the nonlinear Darcy-Forchheimer equation reduces to the linear Darcy's law.

Henceforth, we consider an isotropic and homogeneous medium, so that the permeability tensor $K=kI_d$, where $I_d$ denotes the identity matrix, reduces to a constant scalar value $k$. We also assume that the depth function $Z$ is constant, and therefore $\nabla Z$ vanishes. Consequently, the evolution of a slightly compressible Darcy-Forchheimer flow is governed by the following system of equations
\begin{subequations}\label{ibvp}
	\begin{align}
	\mu k^{-1}\mathbf{u}+\beta\rho(p)\,|\mathbf{u}|\,\mathbf{u}+\nabla p&=\mathbf{0}&&\hspace*{-1.5cm}\mbox{in }\Omega\times(0,T],\label{ibvp:a}\\[0.5ex]
	 \partial_tp+\nabla\cdot\mathbf{u}&=f&&\hspace*{-1.5cm}\mbox{in }\Omega\times(0,T],\label{ibvp:b}
	\end{align}
\end{subequations}
together with suitable initial and boundary conditions. After discretizing in time by using the backward Euler method, we have to solve the following problem at each time step: \emph{Find $(\mathbf{u}^n, p^n)$ such that}
\begin{subequations}\label{bvp}
	\begin{align}
	\mu k^{-1}\mathbf{u}^n+\beta\rho(p^n)\,|\mathbf{u}^n|\,\mathbf{u}^n+\nabla p^n&=\mathbf{0}&&\hspace*{0cm}\mbox{in }\Omega,\label{bvp:a}\\[0.5ex]
	 p^n+\tau\nabla\cdot\mathbf{u}^n&=p^{n-1}+\tau f^n&&\hspace*{0cm}\mbox{in }\Omega,\label{bvp:b}
	\end{align}
\end{subequations}
where, as mentioned before, $\tau$ denotes the time step size and $n$ denotes the time level. When considering homogeneous Dirichlet boundary conditions, the variational formulation of the preceding system reads as follows: \emph{Find $(\mathbf{u}^n, p^n)\in V\times W$ such that, for all $\bv\in V$ and $q\in W$, it holds} 
\begin{subequations}\label{weak:form}
\begin{align}
\la\mu k^{-1}\mathbf{u}^n,\mathbf{v} \ra+\beta\la\rho(p^n)\,|\mathbf{u}^n|\,\mathbf{u}^n,\mathbf{v} \ra-\la p^n,\nabla\cdot\mathbf{v} \ra&=0,&&\label{eq:discrete_u}\\[1ex]
 \la \,p^n,q \ra+\tau\,\la \nabla\cdot\mathbf{u}^n,q \ra&=  \la p^{n-1},q \ra+\tau \la f^n,q \ra,&&\label{eq:discrete_p}
\end{align}
\end{subequations}
where $\la\cdot,\cdot\ra$ denotes the inner product in $L^2(\Omega)$ or $(L^2(\Omega))^d$, and the function spaces are defined to be (cf. \cite{pan:rui:2012,xu:lia:rui:2017})
\[V=\{\mathbf{v}\in (L^3(\Omega))^d:\nabla\cdot\mathbf{v}\in L^2(\Omega)\},\qquad W=L^{2}({\Omega}).
\]

\begin{remark} For the existence and uniqueness of solution of the system \eqref{weak:form}, we refer to \cite{kieu:2020}, where the author considers a similar weak formulation for the semidiscrete slightly compressible generalized Forchheimer equations with the density and the momentum as unknowns.
\end{remark}

\section{Linearization schemes}\label{sec:linschemes}

In the sequel, we introduce a class of linearization schemes for the system \eqref{weak:form}, depending on a parameter $ \gamma \in [0,1]$. Throughout the paper, $i$ denotes the iteration index, while $n$ stands for the time level. Such schemes can be applied to any spatial discretization of the system, and the presentation could be equivalently formulated for the fully discrete variational formulation. Nevertheless, for the sake of clarity, we present them for the semi-discrete variational setting provided by \eqref{eq:discrete_u} and \eqref{eq:discrete_p}, thus avoiding the introduction of an additional index $h$ connected to the spatial discretization spaces.

In particular, in Section \ref{sec:numerics:1}, the linearization schemes are implemented using a spatial discretization based on the two-point flux approximation (TPFA) method. This discretization is equivalent to the lowest order Raviart--Thomas MFE formulation when combined with an appropriate quadrature rule.

{\bf Linearization scheme ($L$-scheme $ \gamma$).} Let $L \ge 0$ be arbitrary, $ \gamma \in [0,1]$, $ {\bf u}^{n,0} := {\bf u}^{n-1}$, $p^{n,0} := p^{n-1}$ and $i \ge 1$. Given $({\bf u}^{n, i-1}, p^{n,i-1}) \in V\times W$, find $(\ui, p^{n,i})\in V\times W$ such that, for all $\bv \in V$ and $q \in W$, it holds
\begin{subequations}
\begin{align}
    \la\mu k^{-1} \ui, \bv \ra + \beta \la \rho(p^{n,i-1})\,|\,{\bf u}^{n,i-1}|\, (\gamma {\bf u}^{n,i-1} &+ (1-\gamma) \ui), \bv \ra \nonumber\\
    + L \la \ui - {\bf u}^{n,i-1}, \bv \ra - \la p^{n,i}, \nabla \cdot \bv \ra &= 0,\label{eq:linscheme:1}\\[1ex]
 \la p^{n,i}, q \ra + \tau \la \nabla \cdot \ui, q \ra &=  \la p^{n,0}, q \ra+\tau\la f^n, q \ra.
\label{eq:linscheme:2}
\end{align}
\end{subequations}

Throughout this work, the schemes corresponding to $ \gamma = 0 $ and $ \gamma = 1 $ in \eqref{eq:linscheme:1}-\eqref{eq:linscheme:2} are studied in detail, since they employ the most common linearization methods for the term $\rho(p^{n})\,|{\bf u}^{n}|\,{\bf u}^{n}$. 

In the sequel, we will study the theoretical convergence of the $L$-scheme $\gamma$. We refer to the \ref{sec:appendix} for the convergence of the iterative scheme in a simplified Darcy-Forchheimer model, when $\rho \equiv 1$.

Throughout this paper, we will make use of the following assumptions:

 \begin{itemize}
    \item[(A1)] The permeability constant and the Forchheimer number satisfy  $k >0$ and $\beta > 0$, respectively.
  \item[(A2)]  For any $n\in \mathbb{N}$, there exists $M_u \in (0, +\infty) $ such that $ \| \ui \|_{L^{\infty}(\Omega)} \le M_u $, for all $i$. Note that $M_u$ might depend on the time step level $n$.
  \item[(A3)] The function $\rho : \mathbb{R} \rightarrow \mathbb{R}$ is Lipschitz continuous, with Lipschitz constant $L_\rho$, monotonically increasing and satisfies
  \begin{equation*}
     0 < m_\rho \le \rho(x) \le M_\rho < \infty \quad \forall\,x \in \mathbb{R}.
  \end{equation*}
\end{itemize}

\begin{remark} The assumption (A1) can be easily relaxed. For example, it can be assumed that the permeability tensor $K$ is a $d\times d$ symmetric and positive definite tensor that satisfies 
\begin{equation*}
{\kappa}_{\ast}\,\mathbf{\xi}^T\mathbf{\xi}\leq
\mathbf{\xi}^{T}{K}\,\mathbf{\xi}\leq
{\kappa}^{\ast}\,\mathbf{\xi}^T\mathbf{\xi}
\quad\forall\,\mathbf{\xi}\neq\mathbf{0}\in\mathbb{R}^d,
\end{equation*}
for some $0<{\kappa}_{\ast}\leq{\kappa}^{\ast}<\infty$. In the same spirit, one can assume that $\beta \in L^{\infty}(\Omega) $ and satisfies, for all $x \in \Omega $,
\begin{eqnarray*}
     0 < \beta_m \le \beta (x) \le \beta_M < \infty,
\end{eqnarray*}
with $\beta_m$, $\beta_M\in\mathbb{R}.$
\end{remark}

\begin{remark} The assumption (A2) is realistic for smooth data. It can be verified by showing the boundedness of the fluxes in smoother spaces than $V$.
\end{remark}

\begin{remark} The assumption (A3) on the density function is satisfied for realistic data. In particular, it is satisfied for the numerical examples used in Section \ref{sec:numerics:1}. 
\end{remark}

In what follows, we will use the following result (see \cite{kna:rob:2014, summ_thesis}):
\begin{lemma} \label{lemma_inequalities} For all $ \mathbf{x},\,\mathbf{y} \in \mathbb{R}^n$, it holds 
\begin{eqnarray}
\la |\mathbf{x}|\,\mathbf{x} - |\mathbf{y}|\,\mathbf{y}, \mathbf{x} - \mathbf{y} \ra \ge \dfrac{1}{2}\,| \mathbf{x}-\mathbf{y} |^{3}. \label{lemma:ineq:2} 
\end{eqnarray} 
\end{lemma}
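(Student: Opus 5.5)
The plan is a direct algebraic identity followed by the triangle inequality, so no earlier result of the paper is needed. Write $a := |\mathbf{x}|$ and $b := |\mathbf{y}|$. Expanding the inner product gives
\[
\la |\mathbf{x}|\,\mathbf{x} - |\mathbf{y}|\,\mathbf{y}, \mathbf{x} - \mathbf{y} \ra = a^3 + b^3 - (a+b)\,\mathbf{x}\cdot\mathbf{y}.
\]
Next I eliminate the mixed term with the polarization identity $\mathbf{x}\cdot\mathbf{y} = \tfrac12\left(a^2 + b^2 - |\mathbf{x}-\mathbf{y}|^2\right)$. This yields
\[
\la |\mathbf{x}|\,\mathbf{x} - |\mathbf{y}|\,\mathbf{y}, \mathbf{x} - \mathbf{y} \ra = a^3+b^3 - \tfrac12 (a+b)(a^2+b^2) + \tfrac12 (a+b)\,|\mathbf{x}-\mathbf{y}|^2 .
\]

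The key computation is to simplify the first three terms. One checks that
\[
a^3+b^3-\tfrac12(a+b)(a^2+b^2) = \tfrac12\left(a^3+b^3-a^2b-ab^2\right) = \tfrac12 (a+b)(a-b)^2 .
\]
Hence
\[
\la |\mathbf{x}|\,\mathbf{x} - |\mathbf{y}|\,\mathbf{y}, \mathbf{x} - \mathbf{y} \ra = \tfrac12 (a+b)(a-b)^2 + \tfrac12 (a+b)\,|\mathbf{x}-\mathbf{y}|^2 .
\]
In this expression both terms are nonnegative.

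To conclude, I drop the first term and use the triangle inequality $a+b = |\mathbf{x}|+|\mathbf{y}| \ge |\mathbf{x}-\mathbf{y}|$ in the second. This gives the lower bound $\tfrac12|\mathbf{x}-\mathbf{y}|^3$, which is \eqref{lemma:ineq:2}.

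The only real obstacle is spotting the factorization $(a+b)(a-b)^2$. Once the polarization identity has been inserted, that factorization is a routine check, so I expect no genuine difficulty. As a sanity check, equality holds for $\mathbf{y}=-\mathbf{x}$.
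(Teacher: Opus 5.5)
Your proof is correct. The expansion, the polarization step and the factorization $a^3+b^3-a^2b-ab^2=(a+b)(a-b)^2$ all check out, so you get the exact identity
\[
\la |\mathbf{x}|\,\mathbf{x}-|\mathbf{y}|\,\mathbf{y},\mathbf{x}-\mathbf{y}\ra=\tfrac12\bigl(|\mathbf{x}|+|\mathbf{y}|\bigr)\bigl((|\mathbf{x}|-|\mathbf{y}|)^2+|\mathbf{x}-\mathbf{y}|^2\bigr).
\]
The triangle inequality $|\mathbf{x}|+|\mathbf{y}|\ge|\mathbf{x}-\mathbf{y}|$ then finishes the argument.

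The paper does not prove this lemma itself. It only cites Knabner--Roberts and Summ's thesis, so there is no in-paper argument to compare with, and yours is a complete, self-contained proof.

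Along the way you obtain the intermediate bound $\tfrac12(|\mathbf{x}|+|\mathbf{y}|)\,|\mathbf{x}-\mathbf{y}|^2$, which is strictly stronger than the stated lemma. The paper only needs the weaker cubic form. It applies that form pointwise and integrates it, together with $\rho\ge m_\rho$, to get the $L^3(\Omega)$ lower bound for $T_{23}$ in the convergence proof.
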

We can now state the main result regarding the convergence of the proposed linearization schemes.
\begin{theorem} \label{theorem_conv_Lscheme1} Assuming that (A1)-(A3) hold true, the $L$-scheme $\gamma$ \eqref{eq:linscheme:1}-\eqref{eq:linscheme:2} converges, for a small enough time step $\tau$, if the parameter {\it L} satisfies
\begin{equation} \label{convergence_condition}
    L > \dfrac{2k\beta^2M_\rho^2M_u^2\,(1 + \gamma^2) }{\mu}.
\end{equation}
\end{theorem}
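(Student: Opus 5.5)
We argue by contraction on the iteration errors. Let $(\mathbf{u}^n,p^n)$ solve \eqref{weak:form} and set $\eiu := \ui - \mathbf{u}^n$ and $\epi := p^{n,i}-p^n$. Subtracting \eqref{weak:form} from \eqref{eq:linscheme:1}--\eqref{eq:linscheme:2}, and using $p^{n,0}=p^{n-1}$, gives the error equations. Since $\gamma\ui^{\,i-1}$-type splitting is awkward to write directly, we rewrite the nonlinear difference as
\[
\rho(p^{n,i-1})|\mathbf{u}^{n,i-1}|(\gamma\mathbf{u}^{n,i-1}+(1-\gamma)\ui)-\rho(p^n)|\mathbf{u}^n|\mathbf{u}^n
= (1-\gamma)\rho(p^{n,i-1})|\mathbf{u}^{n,i-1}|(\eiu-\fui) + \rho(p^{n,i-1})\bigl(|\mathbf{u}^{n,i-1}|\mathbf{u}^{n,i-1}-|\mathbf{u}^n|\mathbf{u}^n\bigr) + \bigl(\rho(p^{n,i-1})-\rho(p^n)\bigr)|\mathbf{u}^n|\mathbf{u}^n .
\]
The second error equation reads $\la \epi,q\ra+\tau\la\nabla\cdot\eiu,q\ra=0$. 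We test it with $q=\epi/\tau$ and test the first error equation with $\bv=\eiu$. The pressure coupling terms then cancel, leaving $\tau^{-1}\|\epi\|^2$ on the left.

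For the $L$-term we use $\la \eiu-\fui,\eiu\ra=\tfrac12(\|\eiu\|^2-\|\fui\|^2+\|\eiu-\fui\|^2)$. The terms on the right are then bounded one at a time. The $(1-\gamma)$ term is at most $\beta M_\rho M_u\,\|\eiu-\fui\|\,\|\eiu\|$, using (A2) and (A3). The Forchheimer difference term is at most $2\beta M_\rho M_u\|\fui\|\|\eiu\|$, since $||\mathbf{x}|\mathbf{x}-|\mathbf{y}|\mathbf{y}|\le(|\mathbf{x}|+|\mathbf{y}|)|\mathbf{x}-\mathbf{y}|$. This step needs $\|\mathbf{u}^n\|_\infty\le M_u$ as well, which we take as part of (A2), viewing $\mathbf{u}^n$ as the limit. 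Alternatively, Lemma \ref{lemma_inequalities} can be invoked to handle the part where $\gamma$ multiplies a monotone structure. The density term is at most $\beta L_\rho M_u^2\|\eip1\|\|\eiu\|$.

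Each product is then split with Young's inequality, absorbing $\|\eiu\|^2$ into $\mu k^{-1}\|\eiu\|^2$, giving for instance $\beta M_\rho M_u ab\le \frac{\mu}{4k}b^2+\frac{k\beta^2M_\rho^2M_u^2}{\mu}a^2$. Distributing $\mu k^{-1}$ among the three terms with appropriate weights, the coefficients in front of $\|\eiu-\fui\|^2$ and $\|\fui\|^2$ should come out proportional to $\frac{k\beta^2M_\rho^2M_u^2}{\mu}(1-\gamma)^2$ and $\gamma^2$-type combinations. The goal is a recursion of the form
\[
\Bigl(\tfrac{L}{2}+c_1\Bigr)\|\eiu\|^2+\tfrac1\tau\|\epi\|^2 \le \Bigl(\tfrac{L}{2}-c_2\Bigr)\|\fui\|^2 + C\beta^2L_\rho^2M_u^4\tfrac{k}{\mu}\|\eip1\|^2,
\]
with $c_1$ a positive fraction of $\mu/k$. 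The $(L/2)\|\eiu-\fui\|^2$ term must dominate the $(1-\gamma)$ contribution, and this is where the condition \eqref{convergence_condition} with the factor $1+\gamma^2$ should emerge.

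Contraction then follows. For the pressure, the coefficient $C k\beta^2 L_\rho^2 M_u^4/\mu$ is smaller than $1/\tau$ once $\tau$ is small enough, and the velocity factor $(L/2-c_2)/(L/2+c_1)$ is below $1$ under \eqref{convergence_condition}. So the combined error norm decreases geometrically, and both $\eiu\to0$ in $L^2$ and $\epi\to0$.

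The main obstacle is the bookkeeping of the Young weights so that the threshold is exactly $2k\beta^2M_\rho^2M_u^2(1+\gamma^2)/\mu$. If the constants do not match, I would first try reorganizing the nonlinearity as $\gamma(\ldots)$ plus $(1-\gamma)\rho|\mathbf{u}^{n,i-1}|\eiu$. In that form the latter term is nonnegative and can simply be dropped, leaving only a $\gamma$-weighted error and an $\mathbf{u}^n$-difference to estimate.
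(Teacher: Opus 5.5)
\textbf{There is a genuine gap: your splitting of the nonlinear term cannot produce a condition on $L$.} The overall frame (error equations, the identity for the $L$-term, Young's inequality, small $\tau$ for the pressure) matches the paper. The problem is that you expand $\gamma\mathbf{u}^{n,i-1}+(1-\gamma)\ui$ around the \emph{previous} iterate. This leaves you with $\beta\la\rho(p^{n,i-1})(|\mathbf{u}^{n,i-1}|\mathbf{u}^{n,i-1}-|\mathbf{u}^n|\mathbf{u}^n),\eiu\ra$, which is a difference at $\mathbf{u}^{n,i-1}$ tested against $\eiu$.

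This term has no sign, so Lemma~\ref{lemma_inequalities} does not apply. The best you can say is that it is bounded by $2a\|\fui\|\,\|\eiu\|$ with $a:=\beta M_\rho M_u$. That bound is not proportional to $\|\eiu-\fui\|$, so the stabilization term $\frac{L}{2}\|\eiu-\fui\|^2$ cannot absorb it. Keep only this term and use $\frac{L}{2}\|\eiu-\fui\|^2\ge\frac{L}{2}(\|\eiu\|-\|\fui\|)^2$. The best your inequality then yields is $\|\eiu\|\le\frac{L+2a}{L+\mu/k}\|\fui\|$. This is a contraction only if $\mu>2k\beta M_\rho M_u$, a restriction on the data that no choice of $L$ can enforce. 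It also fails precisely in the large-$k$, large-$\beta$ regime the scheme is designed for.

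Two consequences follow. First, the recursion with $(\frac{L}{2}-c_2)\|\fui\|^2$ cannot be derived, because the $L$-identity already puts $+\frac{L}{2}\|\fui\|^2$ on the right and every other term only adds to it. Second, the factor $1+\gamma^2$ cannot emerge: the only $L$-absorbable term your splitting produces carries $(1-\gamma)$. Your fallback has the same defect. After dropping $(1-\gamma)\beta\la\rho(p^{n,i-1})|\mathbf{u}^{n,i-1}|\eiu,\eiu\ra\ge0$, the remaining pieces $\gamma\rho(p^{n,i-1})|\mathbf{u}^{n,i-1}|\fui$ and $\rho(p^{n,i-1})(|\mathbf{u}^{n,i-1}|-|\mathbf{u}^n|)\mathbf{u}^n$ are again only $O(\|\fui\|\,\|\eiu\|)$. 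They give the $L$-independent condition $\mu/k>(1+\gamma)\beta M_\rho M_u$.

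\textbf{The missing idea is to expand around the current iterate.} Write $\gamma\mathbf{u}^{n,i-1}+(1-\gamma)\ui=\ui+\gamma(\mathbf{u}^{n,i-1}-\ui)$ and then move the frozen coefficient onto $|\ui|$:
\begin{align*}
&\rho(p^{n,i-1})|\mathbf{u}^{n,i-1}|(\gamma\mathbf{u}^{n,i-1}+(1-\gamma)\ui)-\rho(p^n)|\mathbf{u}^n|\mathbf{u}^n
=\gamma\rho(p^{n,i-1})|\mathbf{u}^{n,i-1}|(\fui-\eiu)\\
&\qquad+(\rho(p^{n,i-1})-\rho(p^n))|\mathbf{u}^{n,i-1}|\ui
+\rho(p^n)(|\mathbf{u}^{n,i-1}|-|\ui|)\ui
+\rho(p^n)(|\ui|\ui-|\mathbf{u}^n|\mathbf{u}^n).
\end{align*}
Test with $\eiu$ and treat the four terms as follows.
\begin{itemize}
\item The first and third terms are at most $\gamma a\|\eiu-\fui\|\,\|\eiu\|$ and $a\|\eiu-\fui\|\,\|\eiu\|$, using $\bigl|\,|\mathbf{x}|-|\mathbf{y}|\,\bigr|\le|\mathbf{x}-\mathbf{y}|$. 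Split each by Young against $\frac{L}{4}\|\eiu-\fui\|^2$; this leaves $\frac{(1+\gamma^2)a^2}{L}\|\eiu\|^2$.
\item The last term is $\ge\frac{\beta m_\rho}{2}\|\eiu\|_{L^3(\Omega)}^3\ge0$ by Lemma~\ref{lemma_inequalities}, so it can be dropped.
\item The density term is split against $\frac{\mu}{2k}\|\eiu\|^2$, which gives $\frac{k\beta^2L_\rho^2M_u^4}{2\mu}\|e_p^{i-1}\|^2$.
\end{itemize}
In your scaling this yields
\[
\Bigl(\frac{\mu}{2k}+\frac{L}{2}-\frac{(1+\gamma^2)a^2}{L}\Bigr)\|\eiu\|^2+\frac{1}{\tau}\|\epi\|^2\le\frac{L}{2}\|\fui\|^2+\frac{k\beta^2L_\rho^2M_u^4}{2\mu}\|e_p^{i-1}\|^2 .
\]
The velocity contracts iff $\frac{\mu}{2k}>\frac{(1+\gamma^2)a^2}{L}$, which is exactly the stated threshold. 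The pressure contracts once $\tau k\beta^2L_\rho^2M_u^4<2\mu$.

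This splitting also uses $L^\infty$ bounds only on the iterates $\mathbf{u}^{n,i-1}$ and $\ui$. You therefore no longer need $\|\mathbf{u}^n\|_{L^\infty}\le M_u$, which you could only justify circularly by ``viewing $\mathbf{u}^n$ as the limit''.
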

{\bf Proof.} We begin by introducing the errors at the $i$th iteration, i.e.,
\begin{align*}
 \eiu &:= {\bu}^{n,i} - {\bu}^n, \\[0ex]
 \epi &:= p^{n,i} - p^n. 
\end{align*}
By subtracting the equations \eqref{eq:discrete_u} and \eqref{eq:discrete_p} from \eqref{eq:linscheme:1} and \eqref{eq:linscheme:2}, respectively, we get
\begin{displaymath}
\begin{array}{l}
   \langle \mu k^{-1} \eiu, \bv \rangle  + \beta\,\la \rho(p^{n,i-1}) \,|{\bf u}^{n, i-1}|\, (\gamma {\bf u}^{n,i-1} + (1-\gamma)\,\ui) - \rho(p^{n})\,|{\bf u}^{n}|\,{\bf u}^{n}, \bv \rangle \nonumber\\[1ex]
  \hspace*{2cm} +\,L\,\la \eiu - \fui, \bv \rangle -  \langle \epi, \nabla \cdot \bv \ra  = 0 
\end{array}
\end{displaymath}
and
\begin{equation*}
 \la \epi, q \rangle + \tau\,\la \nabla \cdot \eiu, q \ra = 0, 
\end{equation*}
  for all $\bv \in V$ and $q \in W$.
Now, we test the above expressions with $\bv = \tau \eiu$ and $ q = \epi $, respectively, and add the resulting equations to obtain
\begin{equation}\label{eq:proof:1}
\begin{array}{l}
 \tau\mu k^{-1} \| \eiu \|^{2} + \tau \beta\,\la \rho(p^{n,i-1})\,|{\bf u}^{n, i-1}|\,\gamma\,({\bf u}^{n, i-1} - {\bf u}^{n, i}) , \eiu \rangle \nonumber \\[1ex]
\hspace*{2cm} +\,\tau\beta\,\la \rho(p^{n,i-1})\,|{\bf u}^{n, i-1}|\,{\bf u}^{n, i} - \rho(p^{n})\,|{\bf u}^{n}|\,{\bf u}^{n}, \eiu \rangle \\[1ex] 
\hspace*{2cm} +\,\tau L\,\langle \eiu - \fui, \eiu \rangle +  \| \epi \|^{2} = 0.
       \end{array}
\end{equation}
This is further equivalent to 
\begin{equation}\label{eq:proof:2}
\begin{array}{l}
    \tau\mu k^{-1} \| \eiu \|^{2} + \tau \beta \gamma\,\la \rho(p^{n,i-1})\,|{\bf u}^{n, i-1}|\,(\fui-\eiu), \eiu \rangle  \\[1ex]
\hspace*{2cm}     + \, \tau \beta\,\la \rho(p^{n,i-1})\,|{\bf u}^{n, i-1}|\,{\bf u}^{n, i} - \rho(p^{n})\,|{\bf u}^{n}|\,{\bf u}^{n}, \eiu \rangle   \\[1ex]
  \hspace*{2cm}  + \, \dfrac{\tau L }{2}\,\| \eiu \|^{2}  + \dfrac{\tau L }{2}\,\| \eiu - \fui \|^{2} +   \| \epi \|^{2} = \dfrac{\tau L }{2}\,\| \fui \|^{2}.
    \end{array}    
\end{equation}
By denoting
\begin{align*}
    T_1 &:=\tau \beta \gamma\,\la \rho(p^{n,i-1})\,|{\bf u}^{n, i-1}|\,(\fui-\eiu), \eiu \rangle, \\[0ex]
    T_2 &:=\tau \beta\, \la \rho(p^{n,i-1})\,|{\bf u}^{n, i-1}|\,{\bf u}^{n, i} - \rho(p^{n})\,|{\bf u}^{n}|\,{\bf u}^{n}, \eiu \rangle, 
\end{align*}
we can rewrite \eqref{eq:proof:2} as
\begin{equation} \label{eq:proof:3}
   \left(\tau\mu k^{-1} + \dfrac{\tau L }{2}\right) \| \eiu \|^{2} + T_1 + T_2 + \dfrac{\tau L }{2}\,\| \eiu - \fui \|^{2} +   \| \epi \|^{2} = \dfrac{\tau L }{2}\,\| \fui \|^{2}.  
\end{equation}
Next, we will look at the terms $T_1$ and $T_2$ separately. By using (A3) and the Cauchy-Schwarz and Young inequalities, we get
\begin{align}
 T_1 &= \tau \beta \gamma\, \la \rho(p^{n,i-1})\,|{\bf u}^{n, i-1}|\,(\fui-\eiu), \eiu \rangle \nonumber \\[1ex]
 & \leq \tau \beta \gamma M_\rho M_u \,\| \eiu - \fui \|\, \| \eiu \| \nonumber\\[1ex]
 & \leq  \dfrac{\tau C_1}{L}\, \| \eiu \|^2 +  \dfrac{\tau L}{4}\, \| \eiu - \fui \|^2, \label{eq:proof:4}
\end{align}
where $C_1 := (\beta \gamma M_\rho M_u)^2$. For the term $T_2$, it holds
\begin{align}
  T_2 &= \tau \beta\,\la \rho(p^{n,i-1})\, |{\bf u}^{n, i-1}|\, {\bf u}^{n, i} - \rho(p^{n})\, |{\bf u}^{n}|\, {\bf u}^{n}, \eiu \rangle   \nonumber \\[1ex]
 & = \tau \beta\, \la (\rho(p^{n,i-1}) - \rho(p^{n})) \,|{\bf u}^{n, i-1}| \,{\bf u}^{n, i}, \eiu \rangle  \nonumber\\[1ex]
 &\hspace*{2cm}+ \tau \beta\, \la \rho(p^{n})\, (|{\bf u}^{n, i-1}|\, {\bf u}^{n, i} -  |{\bf u}^{n}|\, {\bf u}^{n}), \eiu \rangle \nonumber\\[1ex]
 & = \tau \beta\, \la (\rho(p^{n,i-1}) - \rho(p^{n}))\, |{\bf u}^{n, i-1}|\, {\bf u}^{n, i}, \eiu \rangle  \nonumber\\[1ex]
 &\hspace*{2cm}+ \tau \beta\, \la \rho(p^{n})\, (|{\bf u}^{n, i-1}| - |{\bf u}^{n, i}|)\,{\bf u}^{n, i}, \eiu \rangle \nonumber\\[1ex]
 &\hspace*{2cm}+ \tau \beta \,\la \rho(p^{n})\, (|{\bf u}^{n, i}|\, {\bf u}^{n, i} -  |{\bf u}^{n}|\, {\bf u}^{n}), \eiu \rangle \nonumber\\[1ex]
& = T_{21}  +  T_{22}  + T_{23}. \label{eq:proof:5}
\end{align}
We continue by estimating the terms $T_{21}, T_{22}, T_{23}$ above. Using the Lipschitz continuity of $\rho(\cdot)$, the assumption (A2) and the Cauchy-Schwarz and Young inequalities, it follows that
\begin{align}
 T_{21} & =  \tau \beta\,\la (\rho(p^{n,i-1}) - \rho(p^{n}))\, |{\bf u}^{n, i-1}|\, {\bf u}^{n, i}, \eiu \rangle  \nonumber\\[1ex]
& \le \tau \beta M_u^2 L_\rho\, \| \eiu \|\, \| \eip1 \|\nonumber\\[1ex]
& \le \dfrac{\tau\mu k^{-1}}{2}\,\| \eiu \|^{2} + \tau C_{21}\, \| \eip1 \|^2, \label{eq:proof:6}
\end{align}
with $C_{21} := \beta^2 M_u^4 L_\rho^2 k/(2\mu)$. For the next term, using the assumptions (A2) and (A3), together with the Cauchy-Schwarz and Young inequalities, we get
\begin{align}
 T_{22} & =  \tau \beta \,\la \rho(p^{n})\, (|{\bf u}^{n, i-1}| - |{\bf u}^{n, i}|)\, {\bf u}^{n, i}, \eiu \rangle   \nonumber\\[1ex]
& \le  \tau \beta  M_\rho M_u\, \| \eiu - \fui \|\, \| \eiu \|  \nonumber\\[1ex]
& \le  \dfrac{ \tau L}{4}\, \| \eiu - \fui \|^2 + \dfrac{\tau C_{22}}{L}\, \| \eiu \|^2, \label{eq:proof:7}
\end{align}
with $C_{22} := \beta^2 M_\rho^2 M_u^2$. Finally, by (A3) and the inequality \eqref{lemma:ineq:2},  the term $T_{23}$ can be bounded from below as follows
\begin{align}
 T_{23} & =  \tau \beta\, \la \rho(p^{n})\, (|{\bf u}^{n, i}|\, {\bf u}^{n, i} -  |{\bf u}^{n}|\, {\bf u}^{n}), \eiu \rangle  \nonumber\\[1ex]
& \ge \dfrac{\tau \beta m_\rho}{2}\, \| \eiu \|_{L^3(\Omega)}^3 \ge 0. \label{eq:proof:8}
\end{align}
Using now \eqref{eq:proof:4}-\eqref{eq:proof:8} in \eqref{eq:proof:3}, we get
\begin{equation} \label{eq:proof:9}
\begin{array}{l}
   \left(\dfrac{\tau \mu k^{-1}}{2} + \dfrac{\tau L }{2}  - \dfrac{\tau(C_1 + C_{22})}{L}\right) \| \eiu \|^{2}
   +  \| \epi \|^{2} + \dfrac{\tau \beta m_\rho}{2}\, \| \eiu \|_{L^3(\Omega)}^3 \\[1ex] 
   \hspace*{2cm}\le \dfrac{\tau L }{2}\,\| \fui \|^{2} + \tau C_{21}\, \| \eip1 \|^2.  
\end{array} 
\end{equation}
The preceding expression provides a contraction if 
\begin{equation} \label{eq:proof:10}
  \dfrac{\tau \mu\,k^{-1}}{2} >  \dfrac{\tau(C_1 + C_{22})}{L} \end{equation}
and
\begin{equation} \label{eq:proof:11}
  1  >  \tau\,C_{21}.  
\end{equation}
We remark that \eqref{eq:proof:10} is equivalent to \eqref{convergence_condition}, and \eqref{eq:proof:11} is satisfied when $\tau$ is small enough. This completes the proof.\\[1ex]
\mbox{ }\hfill {\bf Q.E.D.} 

\begin{remark}
 In the case $\gamma = 0$, the scheme \eqref{eq:linscheme:1}-\eqref{eq:linscheme:2} converges for the largest range of $L$.  This is intuitively justified, because the  choice $\gamma = 0$ corresponds to a linearization of the term $\rho(p^n)\,|\mathbf{u}^n|\,\mathbf{u}^n $ that involves not only $\mathbf{u}^n$ from the last iteration, but also from the current one. It is also consistent with the numerical results, where the $L$-scheme with $\gamma = 0$ performs always better than that considering $ \gamma = 1$.
\end{remark}

\begin{remark}
The positive term $\tau\beta m_\rho\,\| \eiu \|_{L^3(\Omega)}^3/2 $ in \eqref{eq:proof:9} was not used in the contraction estimates. This is a hint that the convergence properties of the scheme might be better.
\end{remark}

\section{Numerical experiments}\label{sec:numerics:1}
In this section, we are going to illustrate the behaviour of the proposed linearization scheme \eqref{eq:linscheme:1}-\eqref{eq:linscheme:2}. Moreover, we shall compare its performance with that of three classical linearization schemes, namely the Newton method, the Picard method, and the relaxed Picard scheme. 

The Picard method corresponds to scheme \eqref{eq:linscheme:1}-\eqref{eq:linscheme:2} considering $\gamma=L=0$.  This scheme was the solver considered in \cite{pan:rui:2012}, where a discretization of mixed finite elements was analyzed for the incompressible Darcy-Forchheimer model. In that paper, the convergence of the iterative algorithm was tested on different examples, considering a value $\beta=1$, where it performed quite well. As we shall see next, as the Forchheimer parameter $\beta$ grows, especially if $k$ increases as well, the convergence of the algorithm deteriorates drastically. In those cases, we can consider the relaxed Picard method, which can be defined as follows: once $(\mathbf{u}^{n,i},p^{n,i})$ are computed with scheme 
\eqref{eq:linscheme:1}-\eqref{eq:linscheme:2} considering $\gamma=L=0$, we update their value with a relaxation parameter $\omega$ as follows:  $(\mathbf{u}^{n,i},p^{n,i})\leftarrow \omega(\mathbf{u}^{n,i},p^{n,i})+(1-\omega) (\mathbf{u}^{n,i-1},p^{n,i-1})$. 

All numerical experiments were performed in MATLAB. CPU times were measured on a desktop computer equipped with an Intel Core i5-14500 processor (2.60 GHz) and 24 GB of RAM.

\subsection{A test with known analytical solution}

First, we consider problem \eqref{ibvp}
posed on $\Omega\times(0,T)=(0,1)^2\times(0,1)$ together with homogeneous Dirichlet boundary conditions for the pressure. In particular, we consider
\[p(x,y,t)=e^{-2t}\,x\,(1-x)\,y\,(1-y),\] 
and, according to \cite{aul:blo:hoa:ibra:2009}, we can obtain the exact velocity field $\mathbf{u}$ by imposing the slightly compressible Darcy-Forchheimer equation
\[\left(\dfrac{\mu}{k}+\beta\,\rho(p)\,|\mathbf{u}|\right)\mathbf{u}=-\nabla p.\]
From the previous equation, we can deduce that 
\[\beta\rho(p)\,|\mathbf{u}|^2+\dfrac{\mu}{k}\,|\mathbf{u}|-|\nabla p|=0.\]
Then, as long as $\beta$ is different from zero, since  $|\mathbf{u}|$ is non-negative, we have
\[|\mathbf{u}|=\dfrac{-\dfrac{\mu}{k}+\sqrt{\dfrac{\mu^2}{k^2}+4\beta\rho(p)\,|\nabla p|}}{2\beta\rho(p)}.\]
Finally, substituting in the first equation, we get
\[\mathbf{u}=\dfrac{-2\nabla p}{\dfrac{\mu}{k}+\sqrt{\dfrac{\mu^2}{k^2}+4\beta\rho(p)\,|\nabla p|}}.\]
Moreover, we consider the following values for the rest of the parameters of the problem: $\mu=\rho_{\rm{ref}}=1$, $p_{\rm{ref}}=0$ and $c_f=10^{-5}$.

In order to approximate the solution of the previous problem, we combine the backward Euler method with time step $\tau$ and the TPFA scheme on a Cartesian staggered grid with mesh size $h$. The resulting nonlinear system arising at each time step is iteratively solved with the aforementioned linearization schemes, considering the following stopping criterion for the difference between the solution of two consecutive iterations
\begin{equation}\label{stopping:crit}\|V^{n,j}-V^{n,j-1}\|\leq \mathtt{tol}_a+\mathtt{tol}_r\,\|V^{n,j}\|,\end{equation} with $\mathtt{tol}_a=\mathtt{tol}_r=10^{-5}$, where $\|\cdot\|$ denotes the discrete $\ell_2$-norm. In particular, we compare the number of iterations, the required computational time, and the condition numbers of the associated linear systems for each of the linearization methods. 

	\begin{table}[t]
	\small
	\renewcommand{\arraystretch}{1.2}
	\begin{center}
		\begin{tabular}{|l|c|c|c|c|c|} \cline{2-6}
			\multicolumn{1}{c|}{} & $\tau=1$ & $\tau=0.5$ & $\tau=0.1$ & $\tau=0.01$ & $\tau=0.001$  \\
			\hhline{|------|}
			Av\_It\_N & 6 & 4.5 & 3 & 2 & 2 \\
				Time\_N& 4.3e$-$1 & 6.6e$-$1 & 2.3e+0 & 1.6e+1 & 1.6e+2  \\\hline
				Av\_It\_P & 3 & 3.5 & 3.4 & 2.9 & 2.2 \\
				Time\_P& 1.1e$-$1 & 2.3e$-$1 & 1.1e+0 & 1.0e+1 & 8.2e+1 
                \\\hline
                Av\_It\_RP & 10 & 9 & 6.7 & 4.4 &  2.6\\
			Time\_RP& 2.8e$-$1 & 5.1e$-$1 & 2.0e+0& 1.4e+1 & 9.3e+1\\
				\hline
				Av\_It\_L0 & 5 & 4 & 3.2 & 2.3 & 2 \\
			Time\_L0& 1.6e$-$1 & 2.6e$-$1 & 1.1e+0 & 8.6e+0 & 7.7e+1	\\
                 \hline
 				Av\_It\_L1 & 9 & 7 & 4.5 & 3.9 & 2.4\\
 				Time\_L1& 2.6e$-$1 & 4.2e$-$1 & 1.4e+0 & 1.3e+1 &  8.8e+1 \\
			\hline
		\end{tabular}
		\caption{Average number of iterations per time step and total CPU time (in seconds) when considering the following methods: Newton (N), Picard (P), relaxed Picard (RP) with relaxation coefficient $\omega=0.7$, $L$-scheme with $\gamma=0$ (L0) and $L=0.07$, and  $L$-scheme with $\gamma=1$ (L1) and $L=0.4$. 
        In this case, $h=1/80$, and $k=\beta=1$.}\label{table:influence:time:step:tpfa}
	\end{center}
\end{table}

Table \ref{table:influence:time:step:tpfa} shows the the average number of iterations per time step and the total CPU time (in seconds) for the different methods when considering $k=\beta=1$, $h=\frac{1}{80}$ and the following values for $\tau\in\{1,0.5,0.1,0.01,0.001\}.$ Here and henceforth, the CPU times have been computed using the MATLAB command \texttt{timeit}, which executes the code multiple times and reports the median runtime. As the time step size decreases, the Newton method yields the smallest average number of iterations per time step, followed by the $L$-scheme (with $\gamma=0$), the classical Picard method, the $L$-scheme (with $\gamma=1$) and the relaxed Picard scheme. However, in terms of CPU time, the schemes were very similar except for the Newton method which was the slowest one. This was especially due to the linear systems to be solved within the Newton iterations being very bad conditioned.

Next, we illustrate how the mesh size $h$ and the parameter values $\beta$ and $k$ influence the number of iterations and the CPU times. In all cases, we compute one time step of size $\tau=1$ and we consider different mesh sizes $h\in\{1/40, 1/80, 1/160, 1/320\}$. The values considered for the parameters are $\beta\in\{1,10,100\}$ and $k\in\{0.01,1,100\}$. 

Among the family of $L$-schemes proposed in the paper, we consider the one with $\gamma=0$, as it yields the best performance in terms of both iteration count and CPU time.
Parameter $L$ for the $L$-scheme is only varied with respect to $\beta$. In fact, it is considered to be proportional to $\beta^{1/2}$. In turn, the Picard relaxation parameter is considered to be $\omega=0.7$.

	\begin{center}
		\begin{table}[t]
			\begin{center}
				\begin{tabular}{|c|c|c|c|c|c|c|c|c|c|c|}\cline{3-10}
					\multicolumn{2}{c|}{}& \multicolumn{2}{c|}{Newton} &  \multicolumn{2}{c|}{Picard}&  \multicolumn{2}{c|}{Rel. Picard} &  \multicolumn{2}{c|}{$L$-scheme}\\\hline
					$N$&$k$& It &  Time & It  & Time &It  & Time &It  & Time \\\hline
					\multirow{3}{*}{40} 
					&$10^{-2}$  & 3 & 5.1e$-$2 & 3 & 2.9e$-$2 & 11 & 7.8e$-$2 & 3 &  2.7e$-$2  \\\cline{2-10}
					&1  & 6 & 8.9e$-$2 & 3 & 2.7e$-$2  & 10 & 7.2e$-$2 & 5 & 4.0e$-$2 \\\cline{2-10}
					&$10^{2}$  & 9 & 1.3e$-$1 & 73 & 4.7e$-$1 & 11 & 8.0e$-$2 & 11 &  7.8e$-$2  \\\hline
					\multirow{3}{*}{80} 
					&$10^{-2}$  & 3 & 2.2e$-$1 & 3 & 1.1e$-$1 & 11 & 3.1e$-$1 & 3 & 1.1e$-$1 \\\cline{2-10}
					&1  & 6 & 4.2e$-$1 & 3 & 1.1e$-$1 & 10 & 2.9e$-$1 &  5 & 1.6e$-$1  \\\cline{2-10}
					&$10^{2}$  & 9 & 6.1e$-$1 & 79 & 2.2e+0 & 11 & 3.3e$-$1 & 11 & 3.1e$-$1 \\\hline
					\multirow{3}{*}{160} 
					&$10^{-2}$  & 3 & 1.6e+0 & 3 & 4.5e$-$1 & 11 & 1.4e+0 & 3 & 4.5e$-$1 \\\cline{2-10}
					&1  & 6 & 3.1e+0 & 3 & 4.5e$-$1 & 10 & 1.3e+0 & 5 & 6.9e$-$1 \\\cline{2-10}
					&$10^{2}$  & 10 & 5.2e+0 & 82 & 1.1e+1 & 11 & 1.7e+0 & 11 & 1.4e+0 \\\hline
					\multirow{3}{*}{320} 
					&$10^{-2}$  & 3 & 1.8e+1 & 3 & 1.9e+0 & 11 & 6.6e+0 & 3 & 1.9e+0 \\\cline{2-10}
					&1 & 6 & 3.6e+1 & 3 & 1.9e+0 & 10 & 5.9e+0 & 5 & 3.0e+0 \\\cline{2-10}
				    &$10^{2}$   & 10 & 5.9e+1 & 83 & 8.2e+2 &  11 & 8.4e+1  & 11 & 1.3e+1 \\\hline
				\end{tabular}
			\end{center}
			\caption{One time step $\tau=1$, $\beta=1$, $c_f=10^{-5}$, Picard relaxation parameter $\omega=0.7$, $L$-scheme ($\gamma=0$) parameter $L=0.07$.}\label{table:beta:1}
		\end{table}
	
		\begin{table}[h!]
			\begin{center}
				\begin{tabular}{|c|c|c|c|c|c|c|c|c|c|c|}\cline{3-10}
					\multicolumn{2}{c|}{}& \multicolumn{2}{c|}{Newton} &  \multicolumn{2}{c|}{Picard}&  \multicolumn{2}{c|}{Rel. Picard} &  \multicolumn{2}{c|}{$L$-scheme}\\\hline
					$N$&$k$& It &  Time & It  & Time &It  & Time &It  & Time\\\hline
					\multirow{3}{*}{40} 
					&$10^{-2}$  & 4 & 6.3e$-$2 & 3 & 2.7e$-$2 & 11 & 7.7e$-$2 & 3 &  2.7e$-$2\\\cline{2-10}
					&1  & 8 & 1.1e$-$1 & 5 & 4.0e$-$2 &  10 & 7.3e$-$2 & 6 & 4.6e$-$2\\\cline{2-10}
					&$10^{2}$  & 11 & 1.5e$-$1  & 132 & 8.5e$-$1 & 11 & 7.9e$-$2 & 9 & 6.6e$-$2 \\\hline
					\multirow{3}{*}{80} 
					&$10^{-2}$  & 4 & 2.8e$-$1  & 3 & 1.0e$-$1 & 11 & 3.1e$-$1 & 3 & 1.0e$-$1 \\\cline{2-10}
					&1  & 8 & 5.4e$-$1  & 5 & 1.6e$-$1 & 10 & 2.9e$-$1 & 6 & 1.8e$-$1 \\\cline{2-10}
					&$10^{2}$  & 12 & 8.0e$-$1 & $-$ & $-$ & 11 &  3.2e$-$1& 9 & 2.7e$-$1 \\\hline
					\multirow{3}{*}{160} 
					&$10^{-2}$  & 4 & 2.1e+0  & 3 & 4.5e$-$1 & 11 & 1.4e+0 & 3 & 4.5e$-$1 \\\cline{2-10}
					&1  & 8 & 4.2e+0  & 5 & 6.9e$-$1 & 10 & 1.3e+0 & 6 & 8.0e$-$1 \\\cline{2-10}
					&$10^{2}$  & 12 & 6.2e+0  & $-$ & $-$ & 11 & 1.4e+0 & 9 & 1.2e+0 \\\hline
					\multirow{3}{*}{320} 
					&$10^{-2}$  & 4 & 2.4e+1  & 3 & 1.9e+0 & 11 & 6.5e+0 & 3 & 1.9e+0 \\\cline{2-10}
					&1  & 8 & 4.8e+1  & 5 & 3.1e+0 & 10 & 6.0e+0& 6 & 3.6e+0 \\\cline{2-10}
					&$10^{2}$  & 12 & 7.1e+1  & $-$ & $-$ & 11 & 7.7e+0 & 9 & 5.3e+0  \\\hline
					\end{tabular}
			\end{center}
			\caption{One time step $\tau=1$, $\beta=10$, $c_f=10^{-5}$, Picard relaxation parameter $\omega=0.7$, $L$-scheme ($\gamma=0$) parameter $L=0.22$.}\label{table:beta:1e+1}
		\end{table}
	
		\begin{table}[h!]
			\begin{center}
				\begin{tabular}{|c|c|c|c|c|c|c|c|c|c|c|}\cline{3-10}
					\multicolumn{2}{c|}{}& \multicolumn{2}{c|}{Newton} &  \multicolumn{2}{c|}{Picard}&  \multicolumn{2}{c|}{Rel. Picard} &  \multicolumn{2}{c|}{$L$-scheme}\\\hline
					$N$&$k$& It &  Time & It  & Time &It  & Time &It  & Time\\\hline
					\multirow{3}{*}{40} 
					&$10^{-2}$  & 6 & 9.0e$-$2 & 3 & 2.7e$-$2 & 11 & 7.8e$-$2 & 4 &  3.3e$-$2 \\\cline{2-10}
					&1  & 11 & 1.5e$-$1  &  10 & 7.1e$-$2  & 10 & 7.2e$-$2 & 6 & 4.5e$-$2\\\cline{2-10}
					&$10^{2}$  & 12 & 1.7e$-$1 & $-$ & $-$ & 11 & 7.7e$-$2 & 9 & 6.4e$-$2 \\\hline
					\multirow{3}{*}{80} 
					&$10^{-2}$  & 6 & 4.2e$-$1  & 3 & 1.1e$-$1 & 11 & 3.2e$-$1 & 4 & 1.3e$-$1 \\\cline{2-10}
					&1  & 11 &  7.4e$-$1  & 10 & 2.9e$-$1 & 10 & 2.9e$-$1 & 6 & 1.8e$-$1 \\\cline{2-10}
					&$10^{2}$  & 12 & 8.0e$-$1  & $-$ & $-$ & 11 & 3.1e$-$1 & 9 & 2.6e$-$1 \\\hline
					\multirow{3}{*}{160} 
					&$10^{-2}$  & 6 & 3.1e+0  & 3 & 4.5e$-$1 & 11 & 1.4e+0 & 4 & 5.7e$-$1 \\\cline{2-10}
					&1  & 11 & 5.7e+0  & 10 & 1.3e+0 & 10 & 1.3e+0 & 6 & 8.1e$-$1 \\\cline{2-10}
					&$10^{2}$  & 13 & 6.7e+0  & $-$ & $-$ & 12 & 1.5e+0 & 9 & 1.2e+0 \\\hline
					\multirow{3}{*}{320} 
					&$10^{-2}$  & 6 & 3.6e+1  & 3 & 1.9e+0 & 11 & 6.5e+0 & 4 &  2.5e+0 \\\cline{2-10}
					&1 & 11 & 6.5e+1 & 10 & 5.9e+0 & 10 & 5.9e+0 & 6 & 3.6e+0 \\\cline{2-10}
				    &$10^{2}$   & 13 & 7.7e+1  & $-$ & $-$ & 12 & 7.1e+0& 9 & 5.3e+0  \\\hline
				\end{tabular}
			\end{center}
			\caption{One time step $\tau=1$, $\beta=100$, $c_f=10^{-5}$, Picard relaxation parameter $\omega=0.7$, $L$-scheme ($\gamma=0$) parameter $L=0.7$.}\label{table:beta:1e+2}
		\end{table}
	\end{center}

Tables \ref{table:beta:1}-\ref{table:beta:1e+2} show that all the methods are robust with respect to the spatial mesh size $h$. However, it is important to note that increasing the parameters $k$ and $\beta$ enhance the dominance of the nonlinear term over the linear one. As a consequence, as $k$ and/or $\beta$ increase, the methods require more iterations to achieve convergence. In particular, for sufficiently large values of $\beta$ and $k$, the convergence of the classical Picard method significantly deteriorates. The notation ``$-$'' in the tables indicates that the method failed to converge within the prescribed maximum number of iterations, set at 150.

Regarding computational efficiency, the $L$-scheme consistently outperforms the Newton method in terms of CPU time. Moreover, as $\beta$, and especially $k$, increase, the $L$-scheme exhibits the lowest CPU time among all the methods considered.

Next, we illustrate the influence of the mesh size $h$ and the parameter values $\beta$ and $k$ on the condition number of the linearized system matrices. The condition numbers, measured with respect to the $L^1(\Omega)$ norm, are estimated using the MATLAB function \texttt{condest}, which is particularly suited for sparse matrices. Figure \ref{fig:cond:numbers} shows the computed condition numbers, averaged over all iterations, for several mesh sizes and different choices for the Forchheimer coefficient $\beta$ and the permeability parameter $k$. These results indicate a clear growth in the condition numbers as $k$ increases. In particular, for the largest tested value $k=10^{2}$, the $L$-scheme consistently yields the smallest condition numbers across all considered values of $\beta$, indicating a comparatively more favorable behaviour under strong nonlinear effects.

 \begin{figure}[t!]
 \begin{center}
 \hspace*{-1.7cm}
 \includegraphics[scale=0.33]{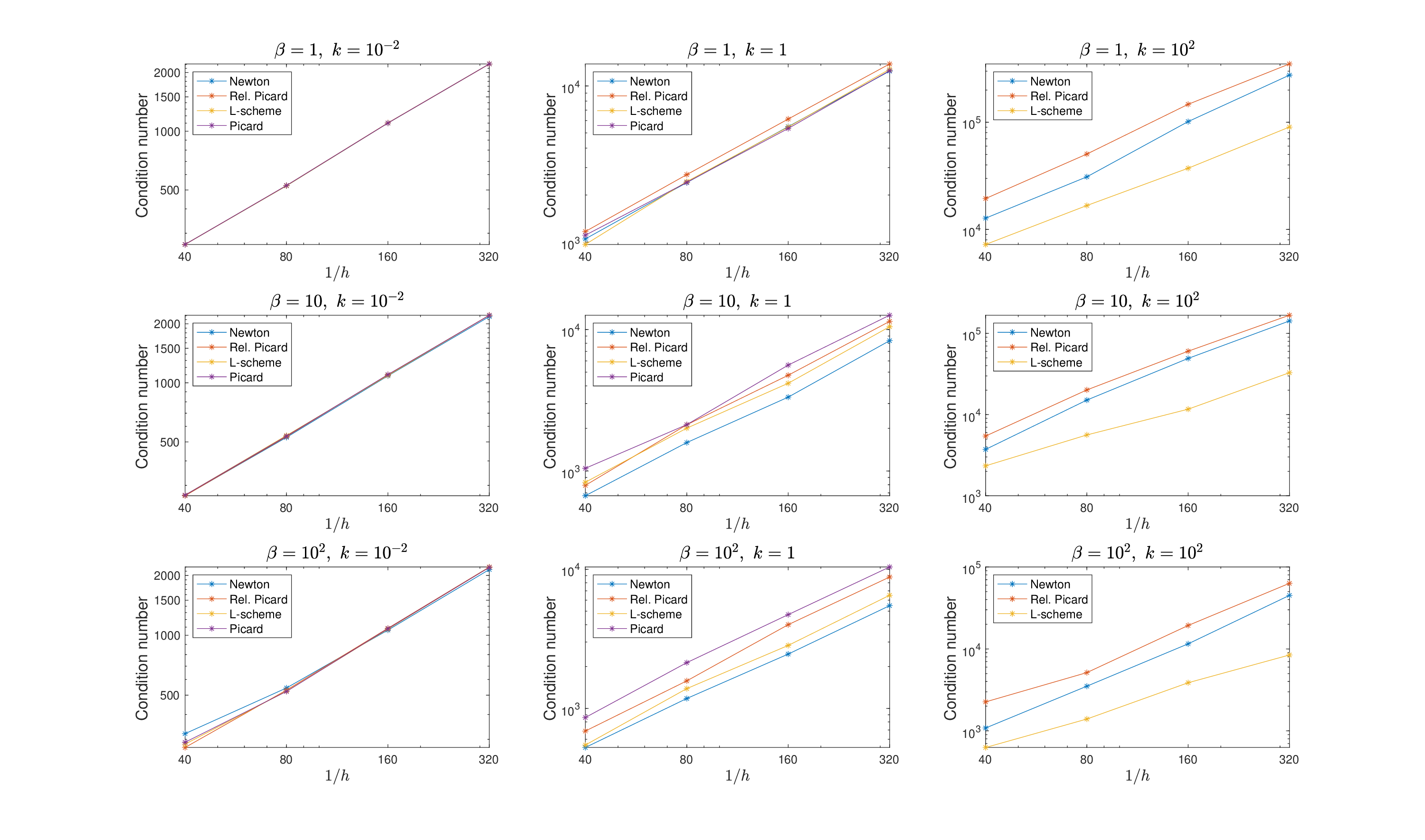}
 \end{center}
 \caption{Average condition numbers for several mesh sizes and different choices for the Forchheimer coefficient $\beta$ and the permeability constant $k$.}\label{fig:cond:numbers}
\end{figure}

\subsection{A test with discontinuous permeability}

Next, we consider problem \eqref{ibvp} posed on $\Omega\times (0,T)=(0,1)^2\times(0,1)$ with $f=0$ and Dirichlet boundary conditions for the pressure given by $p(x,y,t)=1-x$ on the entire $\partial\Omega$. The aim of this numerical example is to extend the comparison of the linearization methods to the solution of problems with abrupt variations in permeability. In particular, we consider several patterns of low-permeability regions: a vertical low-permeability strip, shown in gray in Figure \ref{fig:discont:perm:config} (left), and square- and L-shaped low-permeability inclusions, depicted in gray in Figure \ref{fig:discont:perm:config} (center) and (left), respectively. These types of discontinuous permeability configurations were previously considered in \cite{li:rui:2023,kum:rod:gas:oos:2019}. In all cases, the permeability coefficient is set to $k=10^{-4}$ inside the strip or inclusion and $k = 1$ in the rest of the domain. Moreover, the remaining problem parameters are set $\mu=\rho_{ref}=1$, $p_{ref}=0$ and $c_f=10^{-5}.$

For the iterative solvers, we again consider the $L$-scheme with $\gamma=0$ and a parameter $L$ that is proportional to $\beta^{1/2}$. The Picard relaxation parameter in this experiment is considered to be $\omega=0.8$.

\begin{figure}[t!]
\begin{center}
    \includegraphics[scale=0.17]{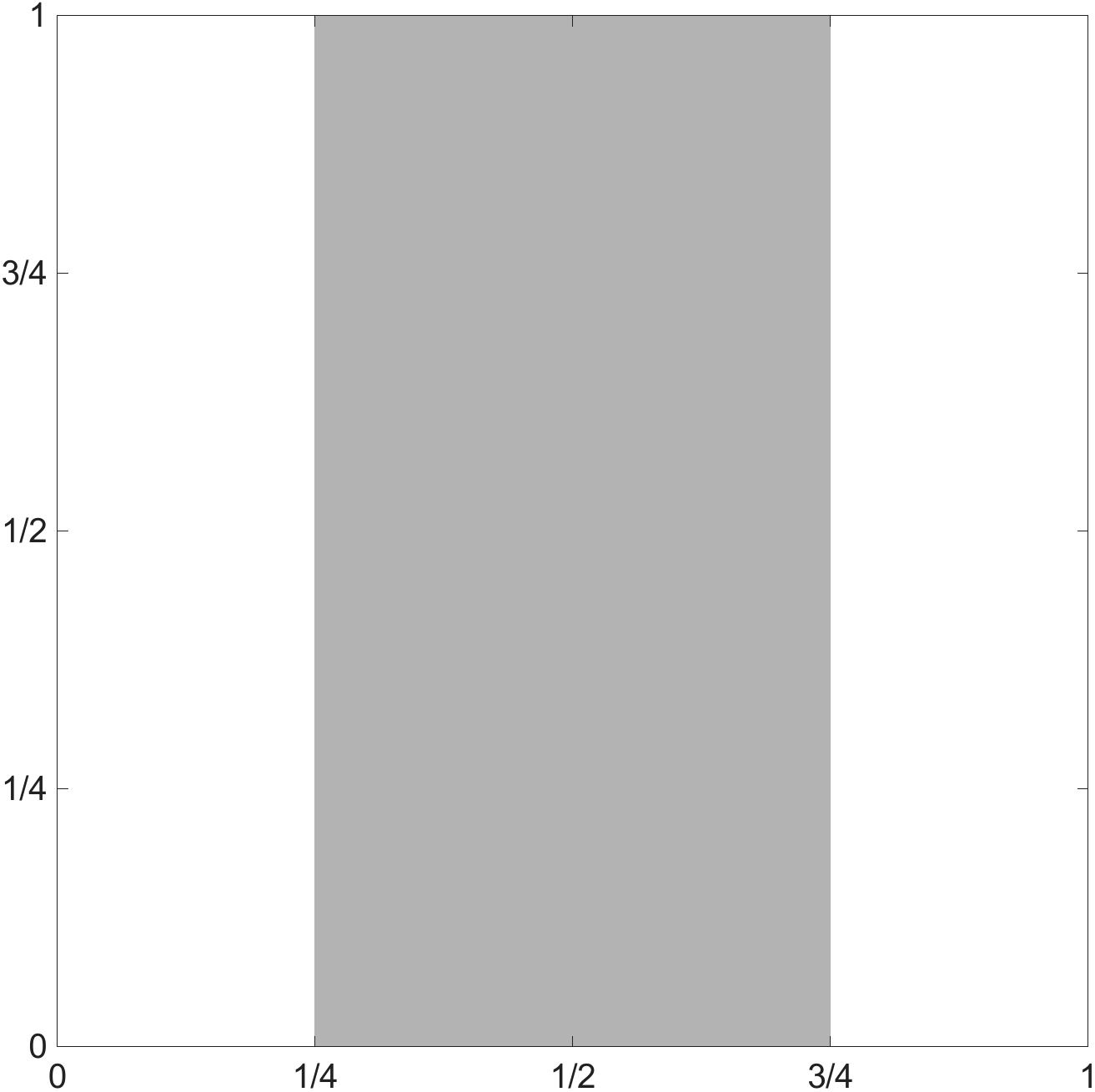}\qquad
    \includegraphics[scale=0.17]{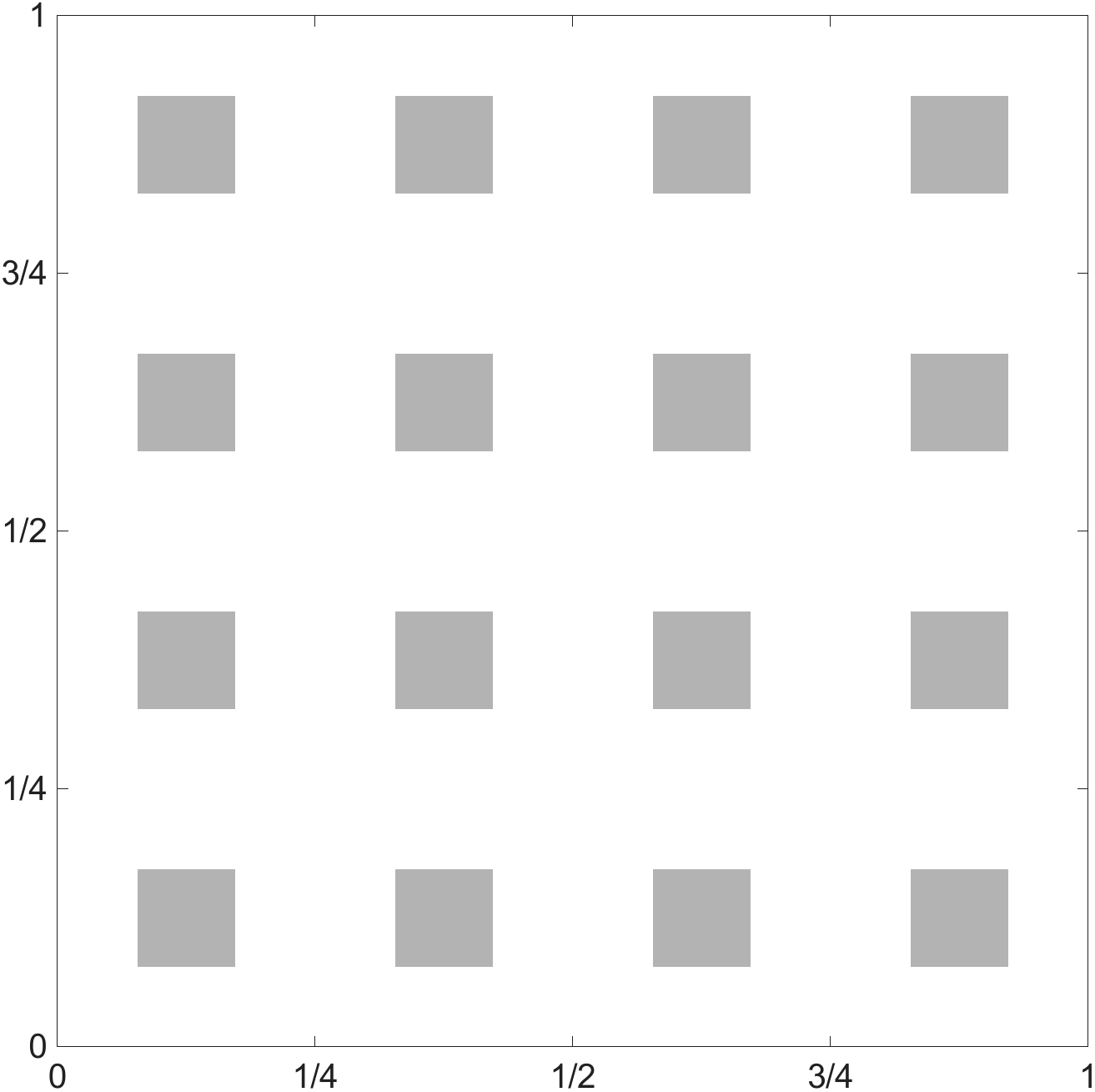}\qquad
    \includegraphics[scale=0.17]{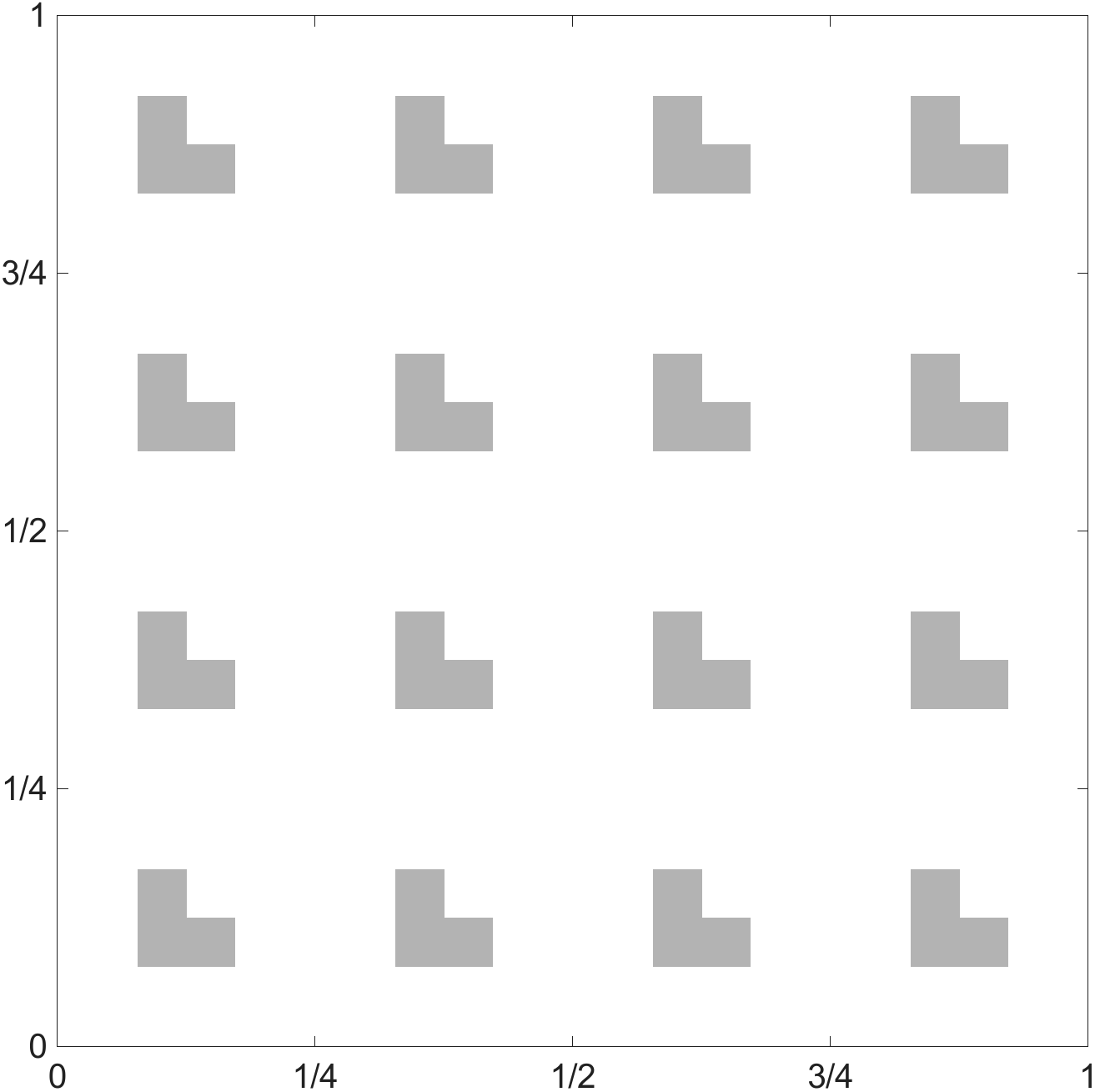}
    \end{center}
    \label{fig:discont:perm:config}\caption{Patterns of discontinuous permeability coefficients: strip (left), square inclusions (center) and L-shape inclusions (right). In the grey areas $k=10^{-4}$, everywhere else $k=1$.}
\end{figure}

Table \ref{table:discont:k} shows the average number of iterations and the total CPU time required by each method to converge, considering the stopping criterion \eqref{stopping:crit} with $\mathtt{tol}_a=\mathtt{tol}_r=10^{-5}$. Similar to the previous example, as $\beta$ grows, each method requires more iterations to converge. This is especially notorious in the first time step for all the methods. Once again, Picard method performs satisfactorily for $\beta=1$, but fails to converge for $\beta=10^4$. Newton method requires the smallest number of iterations to converge but, due to its significantly higher computational cost per iteration, it shows the highest CPU times. In this regard, the $L$-scheme remains the fastest method.

\begin{center}
		\begin{table}[t!]
			\begin{center}
				\begin{tabular}{|c|c|c|c|c|c|c|c|c|c|c|}\cline{3-10}
					\multicolumn{2}{c|}{}& \multicolumn{2}{c|}{Newton}  &  \multicolumn{2}{c|}{Picard}  &  \multicolumn{2}{c|}{Rel. Picard} &  \multicolumn{2}{c|}{$L$-scheme}\\\hline
					Pattern&$\beta$& It &  Time & It &  Time & It  & Time &It  & Time \\\hline
					\multirow{2}{*}{Strip} 
					&1  & 2.5 & 1.4e+1 & 4.3 & 5.0e+0 & 4.8 & 5.8e+0 &  3.9 &  4.6e+0 \\\cline{2-10}
					&$10^4$  & 3.2 & 1.6e+1 & $-$ & $-$ & 5.8 & 6.7e+0 & 3.8  & 4.5e+0  \\\hline
					\multirow{2}{*}{Squares} 
					&1  & 2.3 & 1.2e+1 & 3.3 & 3.9e+0 & 3.7 & 4.3e+0 &  3 & 3.5e+0  \\\cline{2-10}
					&$10^4$  & 3 & 1.5e+1 & $-$ & $-$& 5 & 5.9e+0 & 3.7  & 4.4e+0  \\\hline
					\multirow{2}{*}{L-shapes} 
					&1  & 2.3 & 1.2e+1 & 3.6 & 4.2e+0 & 3.8 & 4.5e+0 & 3.2  &  3.8e+0 \\\cline{2-10}
					&$10^4$ & 3 & 1.6e+1 & $-$ & $-$ & 5.2 & 6.1e+0 & 3.8  & 4.5e+0  \\\hline
				\end{tabular}
			\end{center}
			\caption{Time step $\tau=0.1$, mesh size $h=1/160$, $c_f=10^{-5}$, Picard relaxation parameter $\omega=0.8$, $L$-scheme ($\gamma=0$) parameter $L=0.7$ for $\beta=1$, and $L=70$ for $\beta=10^4$.}\label{table:discont:k}
		\end{table}
	\end{center}

    \begin{figure}[t]
    \begin{center}\vspace*{-1cm}\hspace*{-0.4cm}
    \includegraphics[scale=0.19]{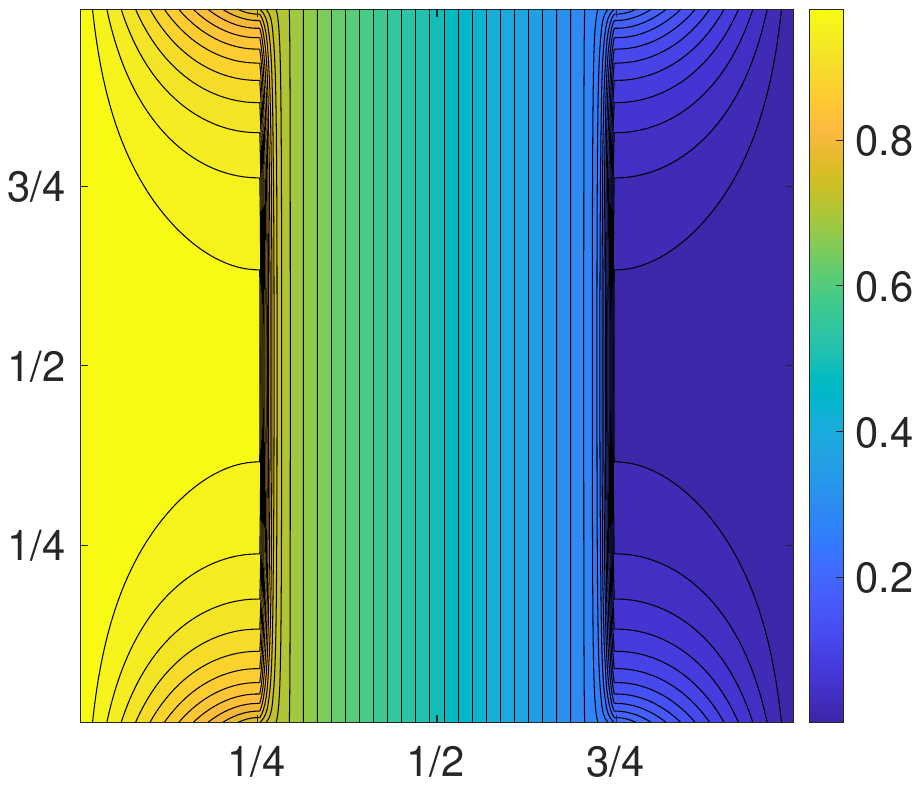}\hspace*{-0.8cm}
    \includegraphics[scale=0.19]{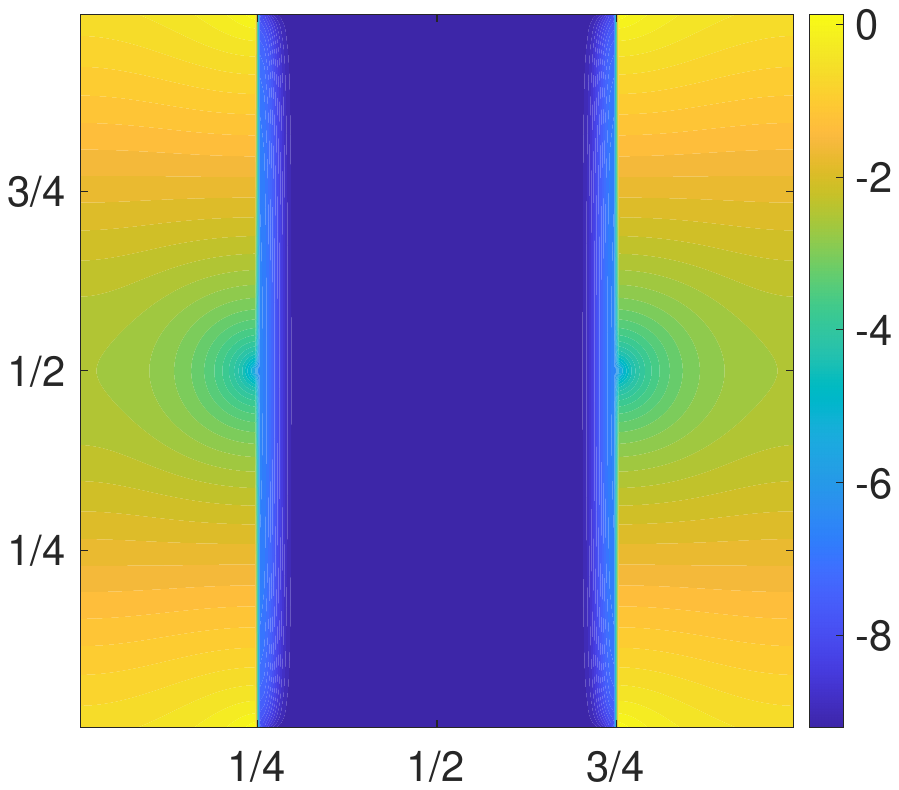}\hspace*{-0.8cm}
    \includegraphics[scale=0.19]{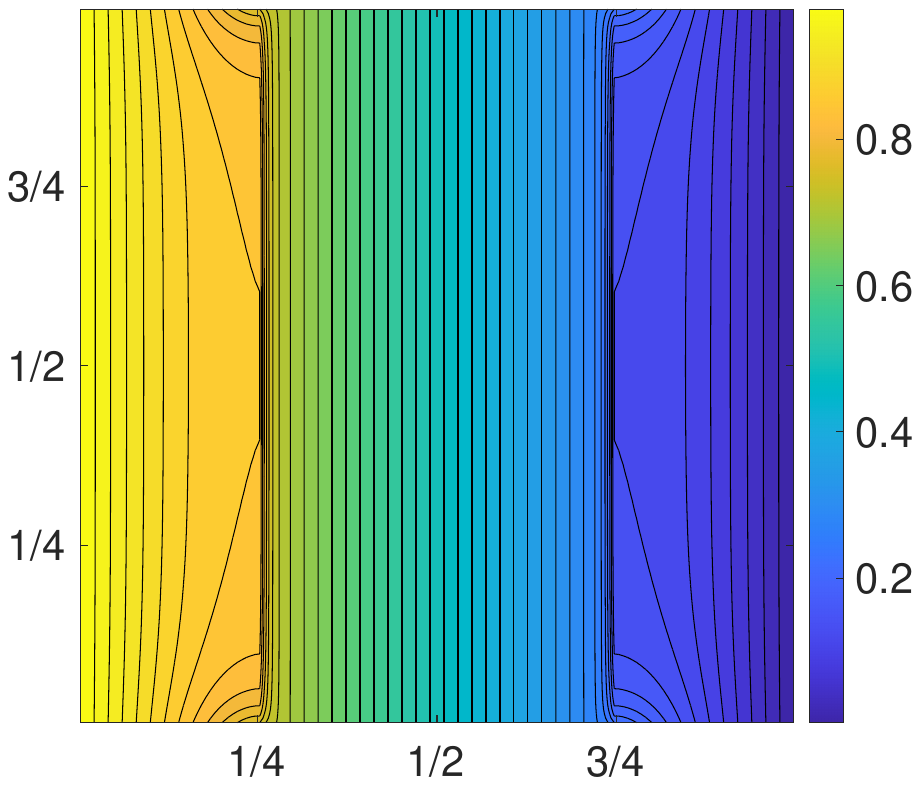}\hspace*{-0.8cm}
    \includegraphics[scale=0.19]{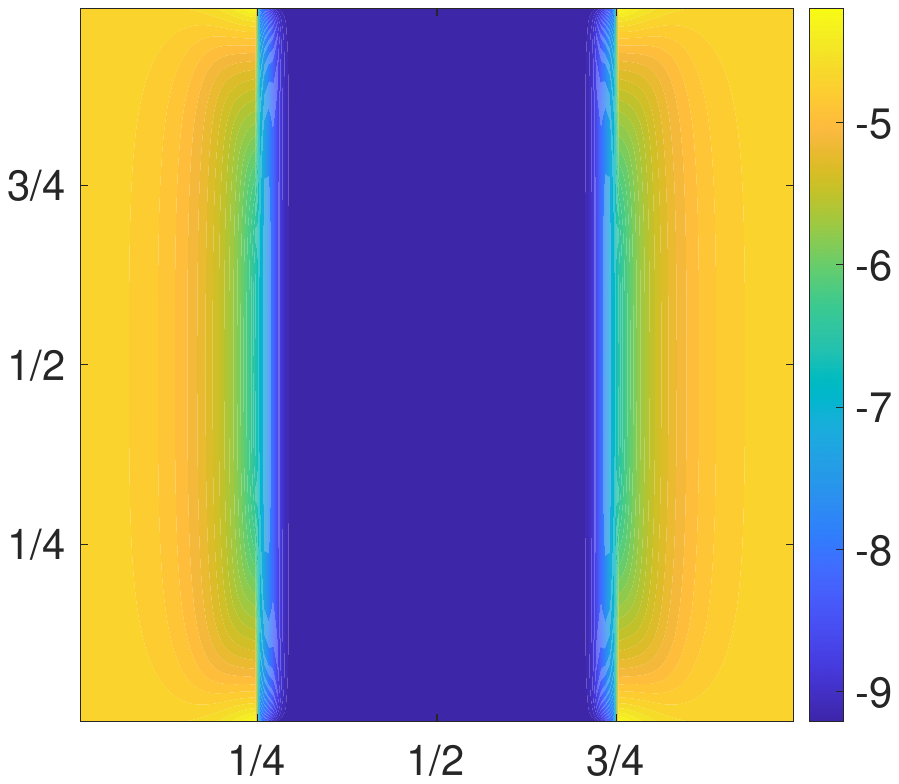}\\[-1.5cm]
    \hspace*{-0.4cm}\footnotesize(a)\hspace*{2.8cm} (b) \hspace*{2.65cm} (c) \hspace*{2.65cm} (d)
    \end{center}
    \caption{(a) Numerical pressure and (b) logarithm of the norm of the numerical velocity for the strip configuration and $\beta=1$. Accordingly, graphs (c) and (d) correspond to the case $\beta=10^4$.}\label{fig:pres:vel:strip}\end{figure}

    \begin{figure}[h!]
    \begin{center}\vspace*{-1cm}\hspace*{-0.4cm}
    \includegraphics[scale=0.19]{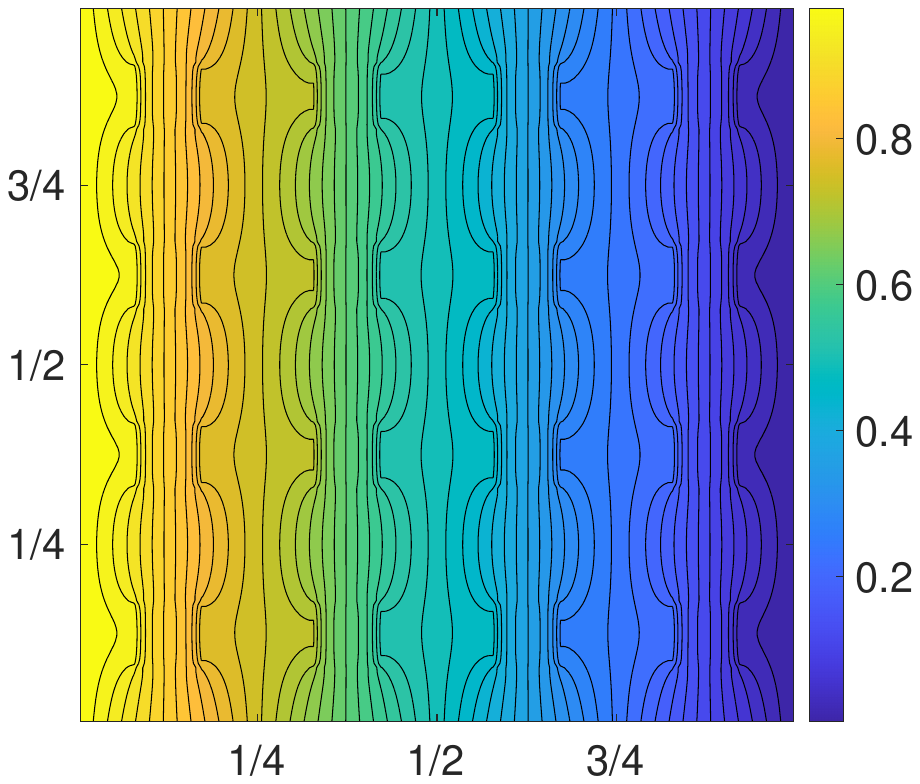}\hspace*{-0.8cm}
    \includegraphics[scale=0.19]{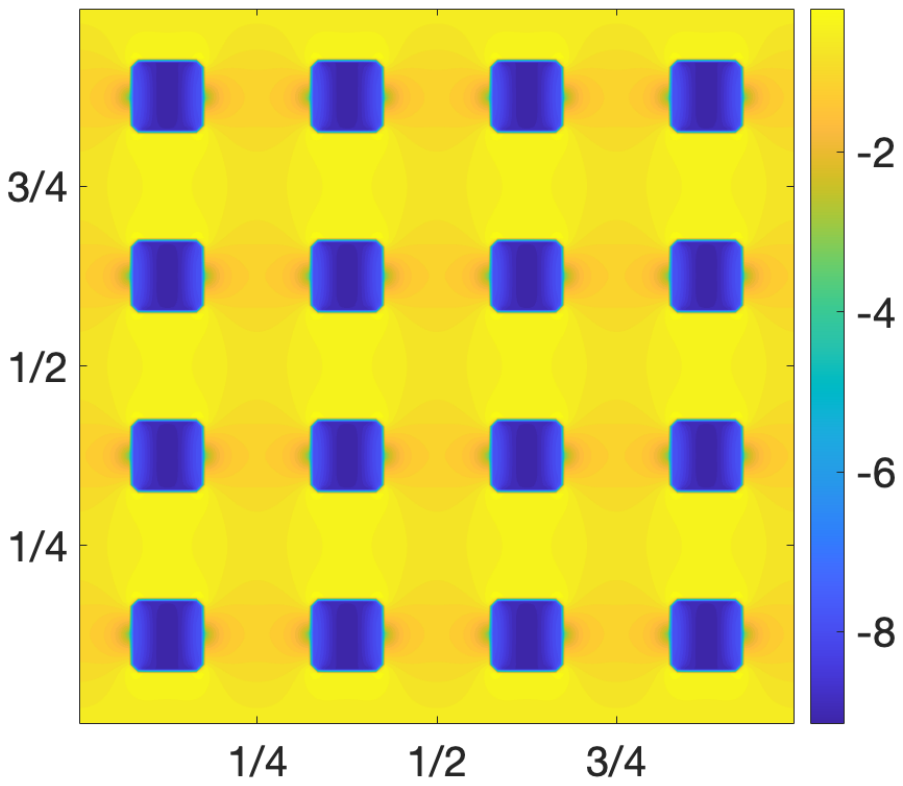}\hspace*{-0.8cm}
    \includegraphics[scale=0.19]{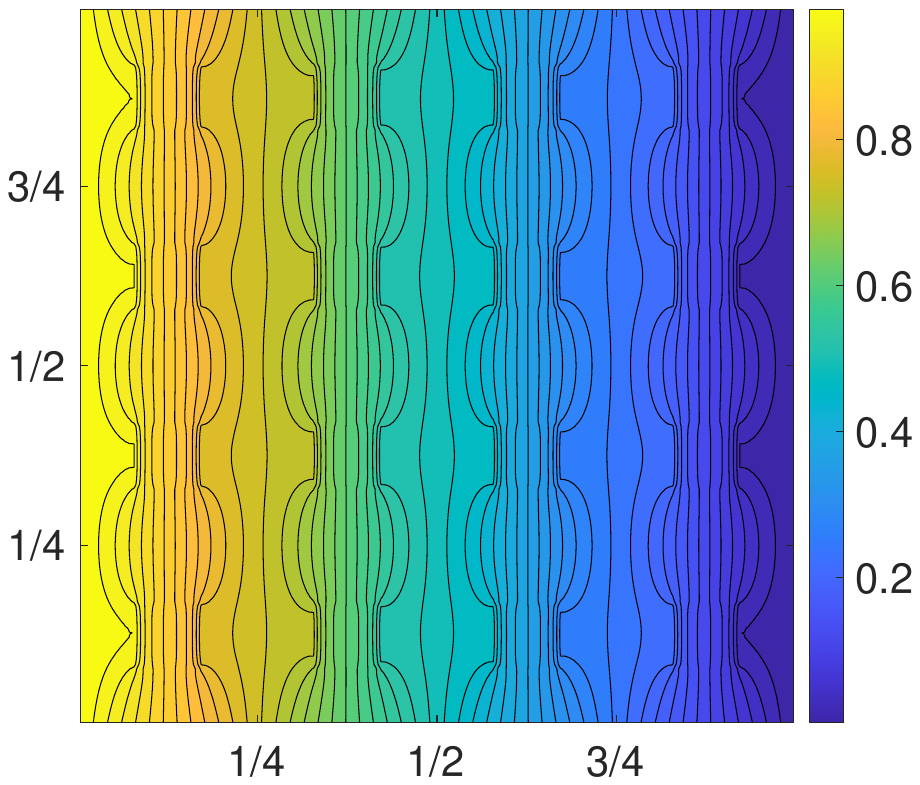}\hspace*{-0.8cm}
    \includegraphics[scale=0.19]{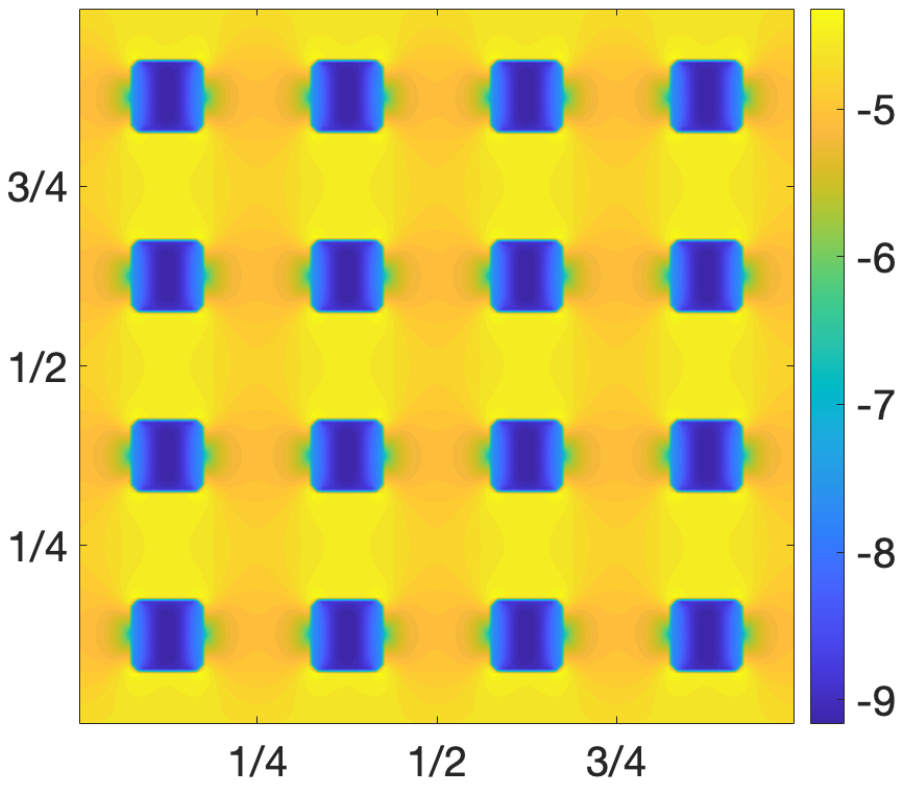}\\[-1.5cm]
    \hspace*{-0.4cm}\footnotesize(a)\hspace*{2.8cm} (b) \hspace*{2.65cm} (c) \hspace*{2.65cm} (d)
    \end{center}
    \caption{(a) Numerical pressure and (b) logarithm of the norm of the numerical velocity for the square inclusion configuration and $\beta=1$. Accordingly, graphs (c) and (d) correspond to the case $\beta=10^4$.}\label{fig:pres:vel:squares}\end{figure}

    \begin{figure}[h!]
    \begin{center}\vspace*{-1cm}\hspace*{-0.5cm}
    \includegraphics[scale=0.19]{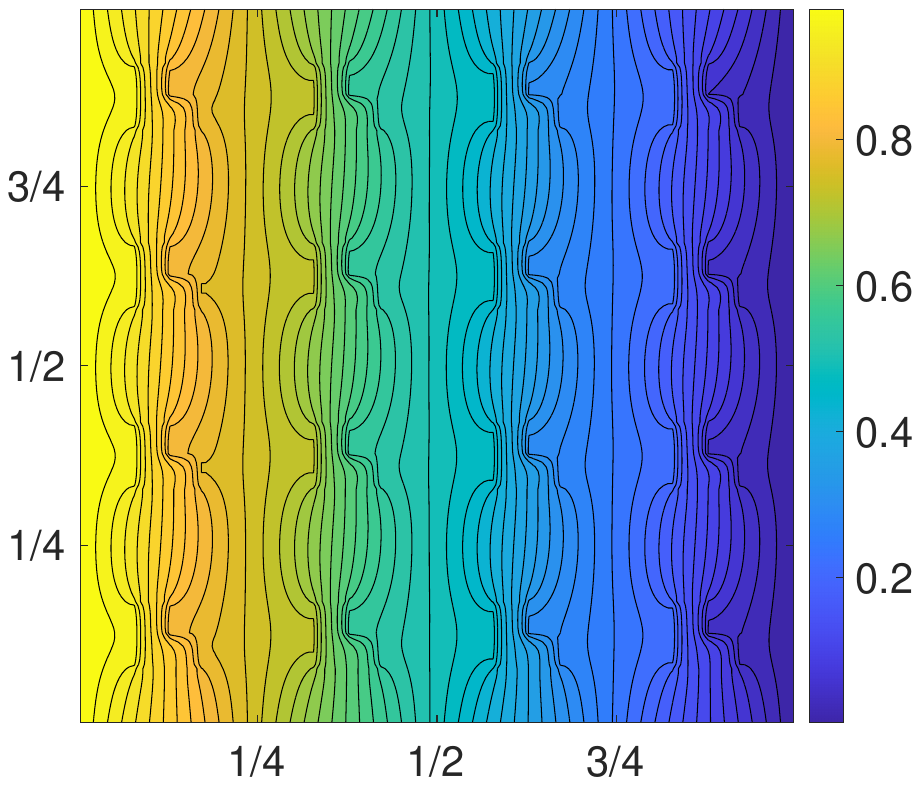}\hspace*{-0.8cm}
    \includegraphics[scale=0.19]{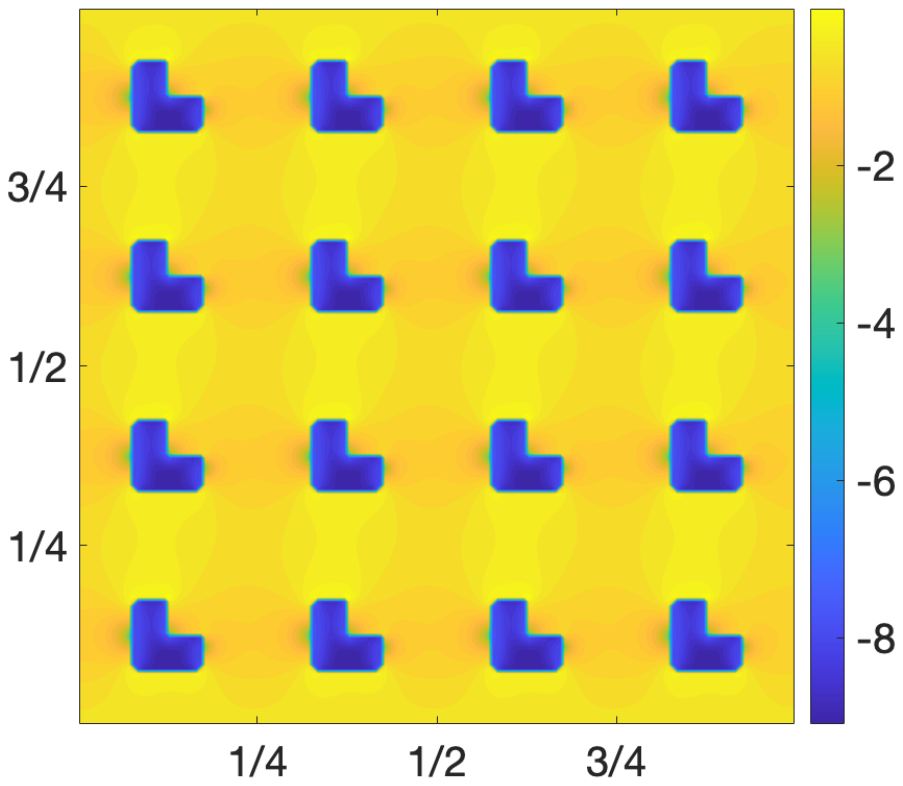}\hspace*{-0.8cm}
    \includegraphics[scale=0.19]{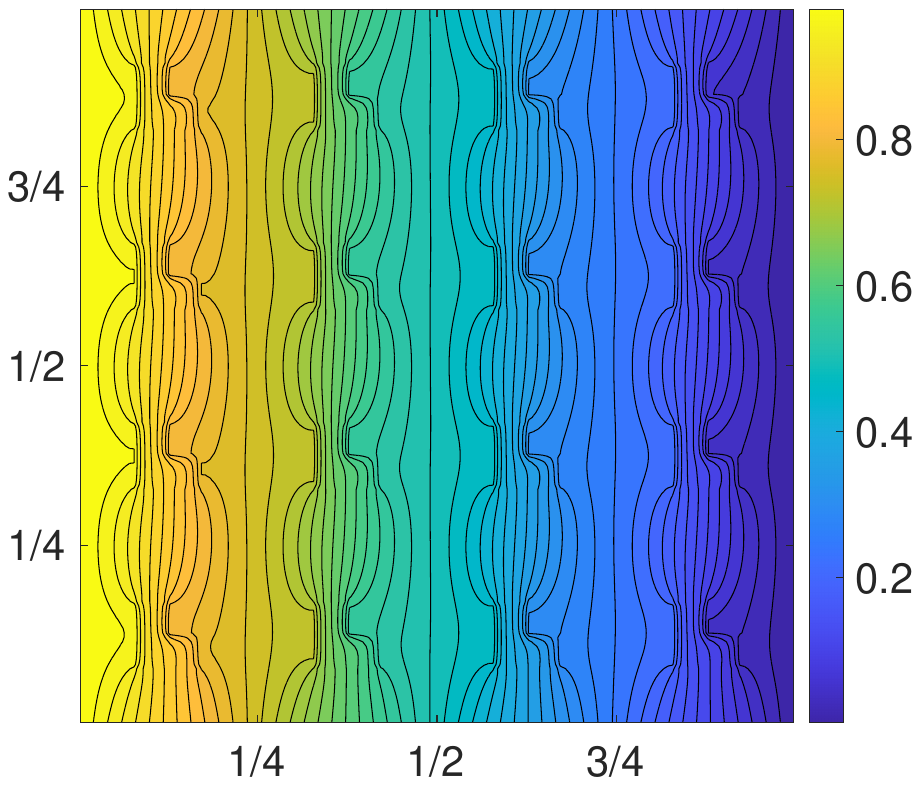}\hspace*{-0.8cm}
    \includegraphics[scale=0.19]{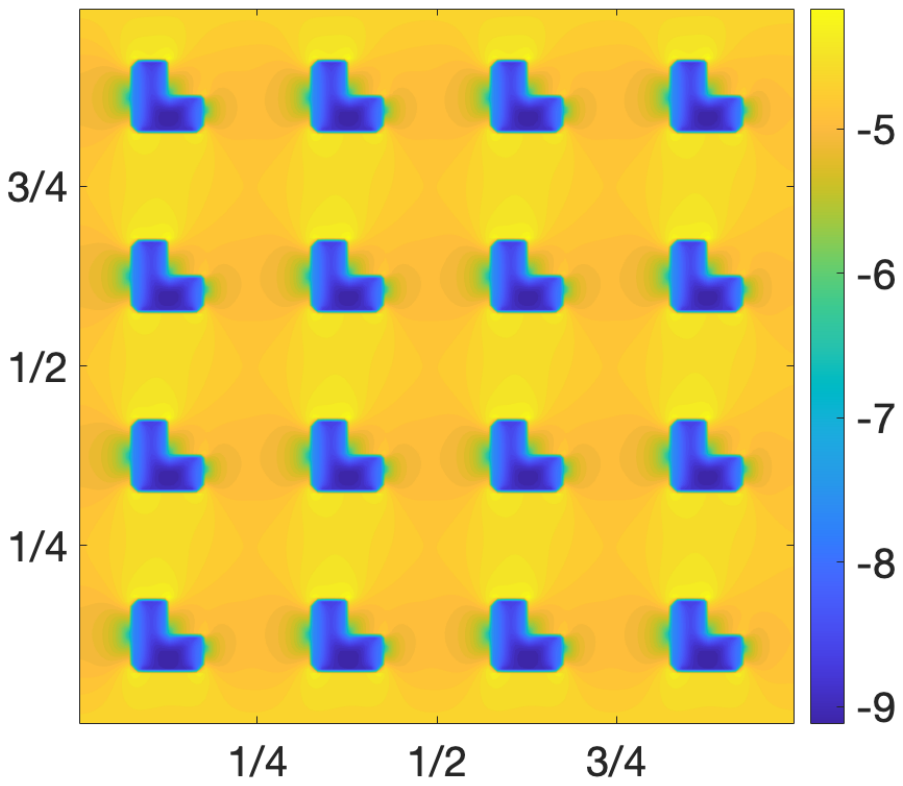}\\[-1.5cm]
    \hspace*{-0.4cm}\footnotesize(a)\hspace*{2.8cm} (b) \hspace*{2.65cm} (c) \hspace*{2.65cm} (d) 
    \end{center}
    \caption{(a) Numerical pressure and (b) logarithm of the norm of the numerical velocity for the L-shape inclusion configuration and $\beta=1$. Accordingly, graphs (c) and (d) correspond to the case $\beta=10^4$.}\label{fig:pres:vel:L:shape}\end{figure}

Figures \ref{fig:pres:vel:strip}-\ref{fig:pres:vel:L:shape} show the numerical pressures and the logarithm of the norm of the numerical velocities at $t=1$, computed using the $L$-scheme for the different discontinuous permeability configurations. In all cases, the computations are performed with a time step $\tau=0.1$ and a mesh size $h = 1/160$. In each figure, plots (a) and (b) correspond to $\beta=1$, while (c) and (d) correspond to $\beta=10^4$.  The numerical solutions in all configurations show the expected physical behaviour, capturing the effects of the low-permeability regions on both the pressure and velocity fields.
    
\section{Conclusions} \label{sec:conclusion}
In this paper, we considered a slightly compressible Darcy-Forchheimer model. The model has many practical applications connected to combustion in porous media and is highly nonlinear. A backward Euler time integration was combined with a mixed finite element spatial discretization to provide numerical approximations to the exact solution. In this framework, we proposed a family of robust linearization schemes, for which a (free to be chosen) tuning parameter is involved, in accordance with the $L$-scheme. We showed the theoretical convergence of such schemes under certain assumptions, and compared them with existing linearization techniques, such as Picard, relaxed Picard and Newton methods. In this respect, a series of numerical experiments were performed to illustrate the convergence and robustness of the proposed algorithms.

\section*{Acknowledgments}
The work of A. Arrarás, L. Portero and F.J. Gaspar was supported by Grant PID2022-140108NB-I00 funded by MCIN/AEI/10.13039/501100011033 and by ``ERDF A way of making Europe''. F.A. Radu acknowledges the
financial support from the Vista Center for Modeling of Coupled Subsurface Dynamics
(VISTA CSD) and the support from the project MUPSI, CETP-2023-00298.

\bibliographystyle{elsarticle-num} 
  \bibliography{bibliography}

\begin{thebibliography}{10}
\expandafter\ifx\csname url\endcsname\relax
  \def\url#1{\texttt{#1}}\fi
\expandafter\ifx\csname urlprefix\endcsname\relax\def\urlprefix{URL }\fi
\expandafter\ifx\csname href\endcsname\relax
  \def\href#1#2{#2} \def\path#1{#1}\fi

\bibitem{gir:whe:2008}
V.~Girault, M.~F. Wheeler, Numerical discretization of a {D}arcy-{F}orchheimer
  model, Numer. Math. 110~(2) (2008) 161--198.

\bibitem{lop:mol:sal:2009}
H.~L{\'o}pez, B.~Molina, J.~J. Salas, Comparison between different numerical
  discretizations for a {D}arcy-{F}orchheimer model, Electron. Trans. Numer.
  Anal 34 (2009) 187--203.

\bibitem{aul:blo:hoa:ibra:2009}
E.~Aulisa, L.~Bloshanskaya, L.~Hoang, A.~Ibragimov, Analysis of generalized
  {F}orchheimer flows of compressible fluids in porous media, J. Math. Phys.
  50~(10) (2009).

\bibitem{kie:2015}
T.~T. Kieu, Analysis of expanded mixed finite element methods for the
  generalized {F}orchheimer flows of slightly compressible fluids, Numer.
  Methods Partial Differ. Equ. 32~(1) (2015) 60–85.

\bibitem{rui:pan:2017}
H.~Rui, H.~Pan, A block-centered finite difference method for slightly
  compressible {D}arcy-{F}orchheimer flow in porous media, J. Sci. Comput.
  73~(1) (2017) 70–92.

\bibitem{CombustionReview}
M.~A. Mujeebu, M.~Z. Abdullah, M.~Z.~A. Bakar, A.~A. Mohamad, R.~M.~N. Muhad,
  M.~K. Abdullah, Combustion in porous media and its applications – {A}
  comprehensive survey, J. Environ. Manag. 90~(8) (2009) 2287--2312.

\bibitem{Fabrie_1989}
P.~Fabrie, Regularity of the solution of {D}arcy-{F}orchheimer’s equation,
  Nonlinear Anal. 13~(9) (1989) 1025–1049.

\bibitem{Amirat_1991}
Y.~Amirat, Écoulements en milieu poreux n’obéissant pas à la loi de
  {D}arcy, ESAIM Math. Model. Numer. Anal. 25~(3) (1991) 273–306.

\bibitem{kna:sum:2016}
P.~Knabner, G.~Summ, Solvability of the mixed formulation for
  {D}arcy-{F}orchheimer flow in porous media (2016).
\newblock \href {http://arxiv.org/abs/1608.08829} {\path{arXiv:1608.08829}}.

\bibitem{douglas1993}
J.~Douglas, Jr., P.~J. Paes-Leme, T.~Giorgi, Generalized {F}orchheimer flow in
  porous media, in: Lions, J.-L., Baiocchi, C. (eds.), Boundary Value Problems
  for Partial Differential Equations and Applications, Vol.~29 of RMA Res.
  Notes Appl. Math., Masson, Paris, 1993, pp. 99--111.

\bibitem{kim1999}
M.~Y. Kim, E.~J. Park, Fully discrete mixed finite element approximations for
  non-{D}arcy flows in porous media, Comput. Math. Appl. 38~(11-12) (1999)
  113--129.

\bibitem{ewing1999}
R.~E. Ewing, R.~D. Lazarov, S.~L. Lyons, D.~V. Papavassilou, J.~Pasciak,
  G.~Qin, Numerical well model for non-{D}arcy flow through isotropic porous
  media, Comput. Geosci. 3~(3-4) (1999) 184--204.

\bibitem{summ_thesis}
G.~Summ, Lösbarkeit un {D}iskretisierung der gemischten {F}ormulierung für
  {D}arcy-{F}orchheimer-{F}luss in porösen {M}edien, Ph.D. thesis,
  Friedrich-Alexander-Universität Erlangen-Nürnberg (2001).

\bibitem{park2005}
E.~J. Park, Mixed finite element methods for generalized {F}orchheimer flow in
  porous media, Numer. Methods Partial Differ. Equ. 21~(2) (2005) 213--228.

\bibitem{peaceman1955}
D.~H. Peaceman, H.~H. Rachford, The numerical solution of parabolic and
  elliptic differential equations, J. Soc. Ind. Appl. Math. 3~(1) (1955)
  28--41.

\bibitem{pan:rui:2012}
H.~Pan, H.~Rui, Mixed element method for two-dimensional {D}arcy-{F}orchheimer
  model, J. Sci. Comput. 52~(3) (2012) 563--587.

\bibitem{rui:pan:2012}
H.~Rui, H.~Pan, A block-centered finite difference method for the
  {D}arcy-{F}orchheimer model, SIAM J. Numer. Anal. 50~(5) (2012) 2612–2631.

\bibitem{wei:whe:1988}
A.~Weiser, M.~F. Wheeler, On convergence of block-centered finite differences
  for elliptic problems, SIAM J. Numer. Anal. 25~(2) (1988) 351--375.

\bibitem{arb:whe:yot:1997}
T.~Arbogast, M.~F. Wheeler, I.~Yotov, Mixed finite elements for elliptic
  problems with tensor coefficients as cell-centered finite differences, SIAM
  J. Numer. Anal. 34~(2) (1997) 828--852.

\bibitem{xu:lia:rui:2017}
W.~Xu, D.~Liang, H.~Rui, A multipoint flux mixed finite element method for the
  compressible {D}arcy-{F}orchheimer models, Appl. Math. Comput. 315 (2017)
  259–277.

\bibitem{both2020DF}
J.~W. Both, J.~M. Nordbotten, F.~A. Radu, Free energy diminishing
  discretization of {D}arcy-{F}orchheimer flow in poroelastic media, in:
  Klöfkorn, R., Keilegavlen, E., Radu, F. A., Fuhrmann, J. (eds.), Finite
  Volumes for Complex Applications IX - Methods, Theoretical Aspects, Examples,
  Vol. 323 of Springer Proc. Math. Stat., Springer, Cham, 2020, pp. 203--211.

\bibitem{bothGradientFlow2019}
J.~W. Both, K.~Kumar, J.~M. Nordbotten, F.~A. Radu, The gradient flow
  structures of thermo-poro-visco-elastic processes in porous media (2019).
\newblock \href {http://arxiv.org/abs/1907.03134} {\path{arXiv:1907.03134}}.

\bibitem{kna:rob:2014}
P.~Knabner, J.~E. Roberts, Mathematical analysis of a discrete fracture model
  coupling {D}arcy flow in the matrix with {D}arcy-{F}orchheimer flow in the
  fracture, ESAIM Math. Model. Numer. Anal. 48~(5) (2014) 1451--1472.

\bibitem{Frih_2006}
N.~Frih, J.~E. Roberts, A.~Saada, Un modèle {D}arcy-{F}orchheimer pour un
  écoulement dans un milieu poreux fracturé, ARIMA 5 (2006) 129–143.

\bibitem{Frih_2008}
N.~Frih, J.~E. Roberts, A.~Saada, Modeling fractures as interfaces: a model for
  {F}orchheimer fractures, Comput. Geosci. 12~(1) (2008) 91–104.

\bibitem{arr:gas:por:rod:2019}
A.~Arrarás, F.~Gaspar, L.~Portero, C.~Rodrigo, Geometric multigrid methods for
  {D}arcy-{F}orchheimer flow in fractured porous media, Comput. Math. Appl.
  78~(9) (2019) 3139--3151.

\bibitem{huang:chen:rui:2018}
J.~Huang, L.~Chen, H.~Rui, Multigrid methods for a mixed finite element method
  of the {D}arcy-{F}orchheimer model, J. Sci. Comput. 74 (2018) 396--411.

\bibitem{rui:liu:2015}
H.~Rui, W.~Liu, A two-grid block-centered finite difference method for
  {D}arcy-{F}orchheimer flow in porous media, SIAM J. Numer. Anal. 53~(4)
  (2015) 1941--1962.

\bibitem{listradu2016}
F.~List, F.~A. Radu, A study on iterative methods for solving {R}ichards’
  equation, Comput. Geosci. 20 (2016) 341--353.

\bibitem{pop2004}
I.~S. Pop, F.~A. Radu, P.~Knabner, Mixed finite elements for the {R}ichards’
  equation: linearization procedure, J. Comput. Appl. Math. 168~(1-2) (2004)
  365--373.

\bibitem{al2024iterative}
R.~Al~Dbaissy, T.~Sayah, Iterative scheme for the {D}arcy-{F}orchheimer problem
  with pressure boundary condition, WSEAS Transactions on Heat and Mass
  Transfer 19 (2024) 94--106.

\bibitem{sayah2021}
T.~Sayah, G.~Semaan, F.~Triki, Finite element methods for the
  {D}arcy-{F}orchheimer problem coupled with the convection-diffusion-reaction
  problem, ESAIM Math. Model. Numer. Anal. 55 (2021) 2643--2678.

\bibitem{dou:rob:1983}
J.~Douglas, J.~E. Roberts, Numerical methods for a model for compressible
  miscible displacement in porous media, Math. Comput. 41~(164) (1983)
  441--459.

\bibitem{rut:ma:1992}
D.~Ruth, H.~Ma, On the derivation of the {F}orchheimer equation by means of the
  averaging theorem, Transp. Porous Med. 7~(3) (1992) 255--264.

\bibitem{kieu:2020}
T.~Kieu, Existence of a solution for generalized {F}orchheimer flow in porous
  media with minimal regularity conditions, J. Math. Phys. 61~(1) (2020).

\bibitem{li:rui:2023}
H.~Li, H.~Rui, Parameter-robust mixed element method for poroelasticity with
  {D}arcy-{F}orchheimer flow, Numer. Methods Partial Differ. Equ. 39~(5) (2023)
  3634--3656.

\bibitem{kum:rod:gas:oos:2019}
P.~Kumar, C.~Rodrigo, F.~J. Gaspar, C.~W. Oosterlee, On local {F}ourier
  analysis of multigrid methods for {PDE}s with jumping and random
  coefficients, SIAM J. Sci. Comput. 41~(3) (2019) A1385--A1413.

\end{thebibliography}


\appendix\section{}\label{sec:appendix}

In this appendix, we consider a simplified Darcy-Forchheimer model in which the density $ \rho $ is assumed to be constant. For simplicity, we assume that $ \rho = 1$. We state a general linearization scheme depending on a parameter $ \gamma \in [0,1]$ and prove its convergence. The case $ \gamma = 0$ was already treated in \cite{al2024iterative}; however, the proof presented here is slightly different.

 {\bf Linearization scheme for the simplified Darcy-Forchheimer model.} Let $L \ge 0$ be arbitrary, $ \gamma \in [0,1]$, $ {\bf u}^{n,0} := {\bf u}^{n-1}$, $p^{n,0} := p^{n-1}$ and $i \ge 1$. Given ${\bf u}^{n, i-1}$, find $(\ui, p^{n,i})$ such that, for all $\bv \in V$ and $q \in W$, it holds
\begin{subequations}
 \begin{align}
     \la \mu k^{-1} \ui, \bv \ra + \beta\,\la |{\bf u}^{n,i-1}|\,(\gamma {\bf u}^{n,i-1} + (1-\gamma) \ui), \bv \ra \nonumber\\
    \hspace*{0cm}+\,L\, \la \ui - {\bf u}^{n,i-1}, \bv \ra - \la p^{n,i}, \nabla \cdot \bv \ra &= \la \mathbf{0},\mathbf{v} \ra,\label{eq:appendix:1}\\[1ex]
\la p^{n,i}, q \ra + \tau \la \nabla \cdot \ui, q \ra &= \la p^{n,0}, q \ra+\tau\,\la f^n, q \ra.
\label{eq:appendix:2}
 \end{align}   
\end{subequations}
The main result regarding the convergence of the proposed linearization schemes now follows.
\begin{theorem} \label{theorem_conv_appendix} Assuming that (A1)-(A2) hold true, the iterative scheme \eqref{eq:appendix:1}-\eqref{eq:appendix:2} converges if the parameter {\it L} satisfies
\begin{equation} \label{convergence_condition_appendix}
    L > \dfrac{k\beta^2 M_u^2\, (1 + \gamma)^2 }{2\mu}.
\end{equation}
\end{theorem}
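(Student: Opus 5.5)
The plan is to mirror the proof of Theorem~\ref{theorem_conv_Lscheme1}, which simplifies here because $\rho\equiv 1$. Since the density does not depend on the pressure, no lagged pressure error appears, and the smallness condition on $\tau$ should drop out.

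\textbf{Error equations.} Define the errors $\eiu := \ui - \bu^n$ and $\epi := p^{n,i}-p^n$. Subtract the $\rho\equiv1$ version of \eqref{weak:form} from \eqref{eq:appendix:1}--\eqref{eq:appendix:2}. Then test the momentum error equation with $\bv=\tau\eiu$ and the mass error equation with $q=\epi$, and add. The divergence terms cancel, as before.

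\textbf{Splitting the nonlinear term.} The key algebraic step is to write the nonlinear difference as
\begin{align*}
|\bu^{n,i-1}|\bigl(\gamma\bu^{n,i-1}+(1-\gamma)\ui\bigr)-|\bu^n|\bu^n
&= \bigl(|\ui|\ui-|\bu^n|\bu^n\bigr) + \bigl(|\bu^{n,i-1}|-|\ui|\bigr)\ui \\
&\quad + \gamma\,|\bu^{n,i-1}|\,(\fui-\eiu).
\end{align*}
The first bracket paired with $\eiu$ is bounded below via Lemma~\ref{lemma_inequalities} by $\frac12\|\eiu\|_{L^3(\Omega)}^3\ge0$, so it can be dropped. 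For the other two pieces, use $\bigl||\bu^{n,i-1}|-|\ui|\bigr|\le|\eiu-\fui|$ together with (A2). Their combined contribution is then bounded by
\[
\tau\beta(1+\gamma)M_u\,\|\eiu-\fui\|\,\|\eiu\| .
\]
Grouping both perturbations into one term, rather than estimating them separately as in the main theorem, is what produces the factor $(1+\gamma)^2$.

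\textbf{The $L$-term and Young's inequality.} Handle the stabilization term with the identity
\[
2\la\eiu-\fui,\eiu\ra=\|\eiu\|^2-\|\fui\|^2+\|\eiu-\fui\|^2 .
\]
This gives $\frac{\tau L}{2}\|\eiu-\fui\|^2$ on the left-hand side. Apply Young's inequality to the perturbation bound above, with the weight chosen so that exactly this term is absorbed:
\[
\tau\beta(1+\gamma)M_u\,\|\eiu-\fui\|\,\|\eiu\| \le \frac{\tau L}{2}\|\eiu-\fui\|^2+\frac{\tau\beta^2(1+\gamma)^2M_u^2}{2L}\|\eiu\|^2 .
\]

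\textbf{Contraction.} Collecting the terms yields
\[
\Bigl(\tau\mu k^{-1}+\tfrac{\tau L}{2}-\tfrac{\tau\beta^2(1+\gamma)^2M_u^2}{2L}\Bigr)\|\eiu\|^2+\|\epi\|^2\le\tfrac{\tau L}{2}\|\fui\|^2 .
\]
Condition \eqref{convergence_condition_appendix} is equivalent to $\mu k^{-1}>\beta^2(1+\gamma)^2M_u^2/(2L)$. Under it, the coefficient on the left strictly exceeds $\tau L/2$, so $\|\eiu\|$ contracts with rate
\[
\frac{L}{L+2\mu k^{-1}-\beta^2(1+\gamma)^2M_u^2/L}<1 .
\]
The same inequality gives $\|\epi\|^2\le\frac{\tau L}{2}\|\fui\|^2\to0$, so the pressure converges too. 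No restriction on $\tau$ is needed.

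The main obstacle I expect is choosing the splitting of the nonlinear term so that the monotone part involves the current iterate $\ui$, where Lemma~\ref{lemma_inequalities} applies, while all remaining pieces are controlled by $\|\eiu-\fui\|$. Once that splitting is in place, the rest follows directly from the estimates above.
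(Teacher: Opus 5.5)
Your proposal is correct and follows the paper's proof essentially step for step. It uses the same error equations and test functions, the same three-way splitting of the Forchheimer term with the monotone part $|\ui|\ui-|\bu^n|\bu^n$ handled by Lemma~\ref{lemma_inequalities}, the same combined bound $\tau\beta(1+\gamma)M_u\|\eiu-\fui\|\,\|\eiu\|$ absorbed by Young's inequality into $\frac{\tau L}{2}\|\eiu-\fui\|^2$, and hence the same $\tau$-independent condition. The only slip is cosmetic: the ratio you display is the contraction factor for $\|\eiu\|^2$, so the factor for $\|\eiu\|$ is its square root, which is still less than one.
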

{\bf Proof.} We use the same notations as in Section \ref{sec:linschemes} for the errors at the $i$th iteration, i.e.,
\begin{align*}
 \eiu & := {\bu}^{n,i} - {\bu}^n, \\
 \epi & := p^{n,i} - p^n. 
\end{align*}
By subtracting the equations \eqref{eq:discrete_u} and \eqref{eq:discrete_p} (with a constant density $ \rho = 1$) from \eqref{eq:appendix:1} and \eqref{eq:appendix:2}, respectively, we get
\begin{align*}
  \langle \mu k^{-1} \eiu, \bv \rangle  + \beta\,\la |{\bf u}^{n, i-1}|\,(\gamma {\bf u}^{n,i-1} + (1-\gamma)\, \ui) - |{\bf u}^{n}|\,{\bf u}^{n}, \bv \rangle \nonumber\\
  +\, L \,\la \eiu - \fui, \bv \rangle -  \langle \epi, \nabla \cdot \bv \ra & = 0
\end{align*}
and
\begin{equation*}
  \la \epi, q \rangle + \tau\,\la \nabla \cdot \eiu, q \ra = 0 , 
\end{equation*}
  for all $\bv \in V$ and $q \in W$. Now, we test the above expressions with $\bv = \tau \eiu$ and $ q = \epi $, respectively, and add the resulting equations to obtain
\begin{align*}
    \tau\mu\,k^{-1} \| \eiu \|^{2} + \tau \beta\, \la |{\bf u}^{n, i-1}|\, \gamma\, ({\bf u}^{n, i-1} - {\bf u}^{n, i}) , \eiu \rangle \nonumber \\
    +\, \tau \beta\, \la |{\bf u}^{n, i-1}|\, {\bf u}^{n,i} - |{\bf u}^{n}|\, {\bf u}^{n}, \eiu \rangle 
    + \tau L\,\langle \eiu - \fui, \eiu \rangle + \| \epi \|^{2} &= 0.
\end{align*}
This is further equivalent to 
\begin{align}
    \tau\mu k^{-1} \| \eiu \|^{2} + \tau \beta \gamma\, \la |{\bf u}^{n, i-1}|\,(\fui - \eiu ), \eiu \rangle + \tau \beta\, \la |{\bf u}^{n, i-1}|\, {\bf u}^{n,i} - |{\bf u}^{n}|\, {\bf u}^{n}, \eiu \rangle \nonumber \\
    + \dfrac{\tau L }{2}\,\| \eiu \|^{2}  + \dfrac{\tau L }{2}\, \| \eiu - \fui \|^{2} +  \| \epi \|^{2} = \dfrac{\tau L }{2}\,\| \fui \|^{2}.\label{eq:proof:appendix:2}
\end{align}
Moreover, by some algebraic manipulations, the previous expression becomes
\begin{align}
    \tau\mu k^{-1} \| \eiu \|^{2} + \tau \beta \gamma\, \la |{\bf u}^{n, i-1}|\, (\fui - \eiu), \eiu \rangle \nonumber \\ 
    +\, \tau \beta\, \la (|{\bf u}^{n, i-1}|  - |{\bf u}^{n, i}|)\, {\bf u}^{n,i}, \eiu \rangle + \tau \beta\, \la |{\bf u}^{n, i}|\,{\bf u}^{n,i}  - |{\bf u}^{n}|\, {\bf u}^{n}, \eiu \rangle \nonumber \\
    +\, \dfrac{\tau L }{2}\,\| \eiu \|^{2}  + \dfrac{\tau L }{2}\, \| \eiu - \fui \|^{2} +  \| \epi \|^{2} &= \dfrac{\tau L }{2}\,\| \fui \|^{2}.\label{eq:proof:appendix:3}
\end{align}
Now, by using the inequality (see, e.g., \cite{kna:sum:2016}; it follows from \eqref{lemma:ineq:2})
\begin{equation*} \label{conv:proof:2}
\la |{\bf u}^{n, i}|\,{\bf u}^{n,i}  - |{\bf u}^{n}|\, {\bf u}^{n}, \eiu \rangle \ge \dfrac{1}{2} \,\| \eiu \|^{3}_{L^{3}(\Omega)},
\end{equation*}
 and denoting 
\begin{displaymath}
 T =  -\tau \beta \gamma\, \la |{\bf u}^{n, i-1}|\, (\eiu - \fui), \eiu \rangle -\tau \beta\, \la (|{\bf u}^{n, i-1}|  - |{\bf u}^{n, i}|)\, {\bf u}^{n,i}, \eiu \rangle,  
\end{displaymath}
we obtain, from \eqref{eq:proof:appendix:3},
\begin{equation} \label{conv:proof:appendix:3}
\begin{array}{c}
    \left(\dfrac{\tau L}{2} +  \tau\,\mu\, k^{-1}\right)  \|  \eiu \|^{2}  + \dfrac{\tau \beta}{2}\,\| \eiu \|^{3}_{L^{3}(\Omega)} + \dfrac{\tau L}{2}\, \| \eiu - \fui \|^{2} +\| \epi \|^2\\
    \hfill\le \dfrac{\tau L }{2}\,\| \fui \|^{2} + T.
    \end{array}
\end{equation}
We now obtain an estimate for the term $T$ above. By using the assumption (A2) and the Cauchy-Schwarz inequality, we get
\begin{align}
 | T |  &\leq \tau \beta M_u\, (1+\gamma)\, \| \eiu - \fui  \|\, \| \eiu \| \nonumber \\[1ex]
     &\leq \dfrac{\tau L}{2}\, \| \eiu - \fui  \|^{2}  + \dfrac{\tau \beta^2  M_u^2\, (1 + \gamma)^2}{2 L}\, \| \eiu \|^2. \label{conv:proof:appendix:4}
\end{align}
In the last step, we have used the Young inequality. Merging \eqref{conv:proof:appendix:3} and \eqref{conv:proof:appendix:4} together, we get
\begin{equation*} \label{conv:proof:appendix:5}
    \left(\dfrac{\tau L}{2} +  \tau\mu k^{-1} -  \dfrac{\tau \beta^2 M_u^2\, (1 + \gamma)^2}{2 L}\right)\|  \eiu \|^{2}  + \dfrac{\tau \beta}{2}\, \| \eiu \|^{3}_{L^{3}(\Omega)} +  \| \epi \|^2 \le \dfrac{\tau L }{2}\,\| \fui \|^{2}. 
\end{equation*} 
Hence, we have a contraction if
\begin{displaymath} 
  \tau\mu k^{-1} > \dfrac{\tau \beta^2 M_u^2\,(1 + \gamma)^2}{2 L},  \end{displaymath}
which is nothing else than \eqref{convergence_condition_appendix}.
This completes the proof.\\
\mbox{ }\hfill {\bf Q.E.D.}

\end{document}